\documentclass[reqno]{amsart}
\usepackage[utf8]{inputenc}
\usepackage[T1]{fontenc}
\usepackage{pgf,pgfarrows,pgfnodes,pgfautomata,pgfheaps,pgfshade,hyperref, amssymb}

\usepackage{amssymb}
\usepackage{enumitem}
\usepackage[english]{babel}
\usepackage[capitalize]{cleveref}
\usepackage{mathtools}
\usepackage{soul}

\usepackage{amsmath}
\usepackage{amsthm}

\usepackage{stix}

\hypersetup{
    colorlinks,
    linkcolor={blue!30!black},
    citecolor={green!50!black},
    urlcolor={blue!80!black}
} %colores de las cros-referencias

\usepackage{mathrsfs} % for \SP and \sT 
\usepackage{dsfont}

\newcounter{thmcounter}

\newcounter{introthmcounter}

\usepackage{aliascnt}
\usepackage[capitalize]{cleveref}

\newtheorem{theorem}{Theorem}[section]

\newaliascnt{proposition}{theorem}
\newtheorem{proposition}[proposition]{Proposition}
\aliascntresetthe{proposition}

\newaliascnt{lemma}{theorem}
\newtheorem{lemma}[lemma]{Lemma}
\aliascntresetthe{lemma}

\newaliascnt{corollary}{theorem}

\aliascntresetthe{corollary}

\newaliascnt{definition}{theorem}
\theoremstyle{definition}

\aliascntresetthe{definition}

\theoremstyle{definition}

\newtheorem*{question*}{Question}

\newcounter{proofcount}
\AtBeginEnvironment{proof}{\stepcounter{proofcount}}% count the proofs

\makeatletter                  % reset the claim counter each time proofcount is
\@addtoreset{claim}{proofcount}% increased, effectively this restarts the claims
\makeatother

\theoremstyle{remark}

\newtheorem*{remark*}{Remark}
\newtheorem*{example*}{Example}

\def\R{{\mathbb R}}
\def\Z{{\mathbb Z}}

\def\N{{\mathbb N}}
\def\Q{{\mathbb Q}}

\def\cF{{\mathcal F}}

\def\cB{{\mathcal B}}

\def\cA{{\mathcal A}}

\def\cS{{\mathcal S}}

\newcommand{\1}{\ensuremath{\mathds{1}}}

\title{Partial rigidity and $q$-additive functions}

\author{Trist\'an Radi\'c}
\address{Department of mathematics, Northwestern University, 2033 Sheridan Rd, Evanston, IL, United States of America}
\email{tristan.radic@u.northwestern.edu}

\thanks{The author was partially supported by a Simons
Foundation International grant administered by the Simons Foundation SFI-MPS-SDF-00025709 TR and the
National Science Foundation grant DMS-2348315.}

\makeatletter
\def\@settitle{\begin{center}%
  \baselineskip19\p@\relax
  \normalfont\LARGE % Mantiene la fuente STIX exacta del texto, en tamaño destacado
  \@title
  \end{center}%
}
\makeatother

\begin{document}
%\date{\today}
\maketitle

% \begin{abstract}
% We construct a minimal subshift $(X,S)$ with $2$ ergodic measures $\mu_1, \mu_2$ with different partial rigidity rates $\delta_{\mu_1}, \delta_{\mu_2}$, whose values and partial rigidity sequences are calculated. The system $(X,S)$ has zero entropy, moreover it is a $\cS-$adic subshift of finite alphabetic rank and has non-superlinear word complexity. 

% \end{abstract}

\vspace{-2em}

\begin{abstract}
A system $(X,\mu, T)$ is partially rigid with constant $\delta>0$ if there exists an incresing sequence $(n_k)_{k \in \N}$ such that $\displaystyle \liminf_{k \to \infty } \mu(A \cap T^{-n_k}A) \geq \delta \mu(A)$  for every measurable $A \subset X$. The partial rigidity rate is the largest constant of partial rigidity. We characterize the partial rigidity rate of subshifts arising from $q$-additive functions $\pmod M$ in terms of quantities arising from Bésineau's theorem. With this, we build a family of strongly $q$-additive functions to deduce that the collection of partial rigidity rates for substitution subshifts is dense in $[0,1]$ answering a question of Donoso, Maass and the author. 
\end{abstract}

\section{Introduction}

A measure preserving system $(X, \mu,T)$ is {\em mixing} if for all measurable sets $A \subset X$, $\lim_{n \to \infty} \mu(A \cap T^{-n}A) = \mu(A)^2$. Roughly speaking, in a mixing system, the events from the far future get independent of the initial ones. A uniform constraint to mixing is {\em partial rigidity}, originally introduced in 1989 by Friedman \cite{Friedman_partial_mixing_rigidity_factors:1989} and further developed by King in \cite{King_joining-rank_finite_mixing:1988}. A system is partially rigid if there exists $\delta >0$ and an increasing sequence $(n_k)_{k \in \N}$ such that 
\begin{equation} \label{eq partial rigidity}
    \liminf_{k \to \infty } \mu(A \cap T^{-n_k} A) \geq \delta \mu(A) \quad \text{ for all measurable sets } A\subset X.
\end{equation}
Historically, systems of particular interest that are first proven to be non-mixing end up being partially rigid, see for example  \cite{bruin_karpel_oprocha_radinger2025rigidity,Creutz_mixing_minimal_comp:2023,Danilenko_finite_rank_rationalerg_partial_rigidity:2016,Dekking_Keane_mixing_substitutions:1978,farhangi_Moreira_zelada2026multipliers,Friedman_partial_mixing_rigidity_factors:1989,Katok_interval_exchange_not_mixing:1980}.

The constant of partial rigidity $\delta >0$ in \eqref{eq partial rigidity} captures the degree of dependency of events in the system along the sequence $(n_k)_{k \in \N}$. It is therefore interesting to characterize the largest constant of partial rigidity for a given system $(X,\mu,T)$. That constant is the {\em partial rigidity rate} and it is denoted $\delta_\mu$. Systems with $\delta_\mu =1$ are called {\em rigid} systems and have been extensively studied (see for example \cite{Bergelson_delJunco_Lemanczyk_Rosenblatt_rigidity_nonrecurrence:2014,Coronel_Maass_Shao_seq_entropy_rigid:2009,Donoso_Shao_uniform_rigid_models:2017,Fayad_Kanigowski_rigidity_wm_rotation:2015,Furstenberg_Weiss77}).  Here we are particularly interested in the case $\delta_\mu <1$, where to the best of the author's knowledge, the only results in the literature are \cite{donoso_maass_radic2023partial,elabdalaoui2009spectrumprigid,radic2024multiple_partial_rigidity}.

In this paper we focus on substitution subshifts and more specifically on substitutions arising from $q$-additive functions to answer part of \cite[Question 7.20]{donoso_maass_radic2023partial} (see \cref{thrm density partial rigidity rates}).  In 1978, Dekking and Keane essentially proved that substitution subshifts are partially rigid \cite{Dekking_Keane_mixing_substitutions:1978}.   For a finite alphabet $\cA$, a substitution $\sigma \colon \cA^* \to \cA^*$ is a morphism for word concatenation and, under mild assumptions, $\sigma$ generates a minimal subshift $X_\sigma \subset \cA^\Z$ (see \cref{sec preliminaries} for more details). These systems have restrictive dynamical properties, for instance, they are uniquely ergodic, have zero entropy, sublinear complexity and only continuous eigenfunctions (see \cite{Durand_Perrin_Dimension_groups_dynamical_systems:2022,Queffelec1987} for a detailed exposition). Further restrictions are imposed when the substitution has constant length, for instance, all eigenvalues are rational \cite{dekking1978spectrum_substitutions}.

Abusing notation, we denote by $\delta_\sigma$ the partial rigidity rate of the uniquely ergodic substitution subshift given by $\sigma$. Despite the extensive study of substitution subshifts, little is known about their partial rigidity rates. In this direction, we establish the following result.

\begin{theorem} \label{thrm density partial rigidity rates}
    The set $\Delta_s = \{ \delta_\sigma \colon \sigma \text{ is a primitive constant length substitution}\}$ is dense in $[0,1]$.
\end{theorem}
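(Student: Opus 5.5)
The plan follows the abstract: realize substitution subshifts as codings of $q$-additive functions $\pmod M$, compute their partial rigidity rates, and then tune parameters to approximate any target in $[0,1]$. Concretely, for an integer $q\ge 2$, a modulus $M$ and a strongly $q$-additive function $f$ given by its values $f(0)=0, f(1),\dots,f(q-1)\in \Z/M\Z$, the sequence $n\mapsto f(n) \bmod M$ is $q$-automatic. It is generated by the constant length $q$ substitution on $\Z/M\Z$ given by $\sigma(a)=(a+f(0))(a+f(1))\cdots(a+f(q-1))$. Under a primitivity hypothesis, namely that the values $f(j)$ generate $\Z/M\Z$, this is a primitive constant length substitution, so its rate $\delta_\sigma$ lies in $\Delta_s$.

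\textbf{Step 1: a formula for $\delta_\sigma$.} The subshift is an isometric extension of the odometer $\Z_q$ by the group $\Z/M\Z$. Hence $\mu(A\cap T^{-n}A)$ for cylinders is governed by the joint distribution of $(f(m+n)-f(m)) \bmod M$ as $m$ varies. Bésineau's theorem gives, along $n$ with fixed $q$-adic tail structure, the limiting correlations $\gamma(n)=\lim_N \frac1N\sum_{m<N} e(h(f(m+n)-f(m))/M)$ and the corresponding distribution of the difference. The key claim I would prove is that $\delta_\sigma$ equals the supremum over sequences $(n_k)$ of $\liminf_k$ of the density of $\{m: f(m+n_k)=f(m)\}$. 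I would show this via the standard reduction from \cite{donoso_maass_radic2023partial}, which computes partial rigidity on a generating family of cylinders by testing on towers given by $\sigma^k$. Partial rigidity along $n_k$ then amounts to the shifted orbit returning to the same symbol. It is natural to take $n_k$ of the form $q^{k}\cdot r$ or $\sum$ of a few fixed digit blocks times $q^k$. For $n=q^k r$ with $r<q$, the difference $f(m+n)-f(m)$ is determined by the carry propagation from digit position $k$. So the density of coincidences is an explicit expression $P_r=\sum_{j\ge 0}\Pr(\text{carry length } j)\,\mathbf 1[\text{difference}\equiv 0]$, computable from the digit values.

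\textbf{Step 2: density via a family.} Choose a family in which the optimum is attained along $n_k=q^k$ and the coincidence density is explicit. The first candidate is $f(n)=s_q(n)$, the digit sum, mod $M$ with $\gcd(M,q-1)$ chosen suitably. Adding $q^k$ changes $s_q$ by $1-j(q-1)$, where $j$ is the number of trailing $(q-1)$'s above position $k$, and this has probability $(q-1)q^{-j-1}$. Hence the coincidence density is $\sum_{j:\,1-j(q-1)\equiv 0 \ (M)} (q-1)q^{-j-1}$. More flexibly, prescribe $f(j)$ on digits so that exactly a chosen fraction $t/q$ of digit transitions yields zero difference. Taking $q$ large and $t$ ranging over $0,\dots,q$ would then produce values of the form $t/q+O(1/q)$, which are dense in $[0,1]$.

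\textbf{Main obstacle.} The hard step is the upper bound: showing no other sequence $(n_k)$ beats the chosen one. Mixed-digit $n_k$ could give larger coincidence densities, and the rate must be a $\liminf$ over all measurable sets, not only cylinders. For this I would use Bésineau's theorem to write the coincidence density for arbitrary $n$ as a product-type expression over the digits of $n$, using the independence of carries across digit blocks separated by non-extremal digits. I would then show this product is maximized, up to $o(1)$, by $n$ with a single nonzero digit. Tuning the digit values so that this maximum is the desired value, with other single-digit choices smaller, completes the density argument.
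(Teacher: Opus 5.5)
\textbf{There is a genuine gap: the upper bound that drives the density argument is missing, and the mechanism you propose for it is false in general.}

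Your Step 1 is essentially the paper's \cref{thrm partial rigidity main}. Testing on letter cylinders gives $\delta_\mu\le\limsup_n d(\{m\colon f(m+n)\equiv f(m)\bmod M\})$. The complete-word characterization of \cite{donoso_maass_radic2023partial} gives the matching lower bound along $(tq^k)_k$. So general measurable sets are not where the difficulty lies. The gap is in Step 2 and your ``main obstacle'', which together are the whole content of the theorem: you never fix a family $f$, and you never prove an upper bound for $\sup_t d(\{m\colon f(m+t)\equiv f(m)\bmod M\})$.

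The coincidence density is not maximized by $n$ with a single nonzero digit in general. For Thue--Morse ($q=M=2$, $f=s_2$), every $n=2^k$ gives density $1/3$, while $t=3$ gives $2/3=\delta_\mu$. Nor does the density factor over digit blocks. Writing $\eta_f(t,m)=d(\{n\colon f(n+t)-f(n)=m\})$ for integer $m$, the digit recursion is
\begin{equation*}
\eta_f(qs+i,m)=\frac1q\Big(\sum_{j=0}^{q-1-i}\eta_f\big(s,m+f(j)-f(i+j)\big)+\sum_{j=q-i}^{q-1}\eta_f\big(s+1,m+f(j)-f(i+j-q)\big)\Big).
\end{equation*}
It expresses the coincidence density $\eta_f(qs+i,0)$ through $\eta_f(s,m')$ and $\eta_f(s+1,m')$ with $m'\neq 0$. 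So any argument that tracks only coincidences cannot close.

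The missing idea is to control all integer differences at once, as the paper does.
\begin{itemize}
\item By the recursion, every $\eta_f(t,m)$ is a convex combination of values $\eta_f(s,n)$ with $1\le s\le q-1$. Hence $\alpha_f=\sup_{t,m}\eta_f(t,m)$ is attained at a one-digit $t$ (\cref{lemma max beta_f}).
\item One then designs $f$ so that this global maximum sits at $m=0$. Take $f(j)=0$ for $1\le j\le k$ and $f(j)=2^{j-k}-1$ for $k<j\le q-1$. Then $\eta_f(1,0)=k/q$, and $\eta_f(t,m)\le k/q$ for all $t<q$ and all $m$ (\cref{prop family of ex}). This uses that $f$ is nondecreasing on digits and that each positive difference $f(i+t)-f(i)$ is realized by at most one digit $i$.
\item Finally, let $M\to\infty$. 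The modular coincidence density is at most the modular analogue $\alpha_f^M=\sup_{t,m}d(\{n\colon f(n+t)-f(n)\equiv m\bmod M\})$. By the same recursion, $\alpha_f^M$ is attained at some $t<q$.
\item For those finitely many $t$, the wrap-around terms $\sum_{r\neq0}\eta_f(t,m+Mr)$ tend to $0$, because $(\eta_f(t,m))_{m\in\Z}$ is a probability vector. Hence $\limsup_M\alpha_f^M=\alpha_f=\eta_f(1,0)$, and so $\delta_{\sigma_{f,M}}\to k/q$ (\cref{lemma alpha_f mod M} and \cref{lemma beta_f approx delta_sigma}).
\end{itemize}

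Your proposal keeps $M$ fixed and has no device for these wrap-around contributions, which can push the rate above the intended value. The claimed values ``$t/q+O(1/q)$'' therefore rest on an upper bound you have not supplied.
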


As mentioned, this was raised in a previous paper joint with Donoso and Maass \cite[Question 7.20]{donoso_maass_radic2023partial} where we established that $1$ is an accumulation point of the set $\Delta_s$, see \cite[Proposition 7.19]{donoso_maass_radic2023partial}. Since there are only countably many substitutions, therefore one cannot expect that $\Delta_s$ equals $(0,1]$. Characterizing the set of partial rigidity rates for other families of dynamical systems (for example, weakly mixing systems, rank-one systems, interval exchange transformations) remains an interesting open question.

One technical difficulty of \cref{thrm density partial rigidity rates} is that, from Danilenko's proof of \cite[Theorem 7.1]{Danilenko_finite_rank_rationalerg_partial_rigidity:2016}, the partial rigidity rate of a substitution subshift on an alphabet with $M$ symbols is greater than or equal to $1/M^M$ (this bound applies in greater generality).  
Therefore to achieve partial rigidity rates that are closer and closer to $0$, one needs to construct substitutions in alphabets with more and more letters. To address that problem we use substitutions arising from strongly $q$-additive functions. 
%This technical difficulty may be solved by brute force using an AI-generated proof, since ideas on how to build these examples were already available in the literature (see for example \cite[Section 7]{donoso_maass_radic2023partial}), but then one would have lost the qualitative information that relates this partial rigidity question with $q$-additive functions and Bésineau's theorem.

In this work $0$ is considered a natural number, so $\N = \{0,1,2,\ldots\}$. For a base $q \geq 2$ of the natural numbers, a  function $f \colon \N \to \N$ is {\em $q$-additive} if $f(0)=0$ and $f(aq^n +b)= f(aq^n)+f(b)$ for all $n \geq1$, $a \in \N$ and $0 \leq b < q^n$. If moreover $f(aq^n) = f(a)$  for all $n \geq1$ and $a \in \N$ then $f$ is a {\em strongly $q$-additive function}.

 A classical example is the sum-of-digits, $s_q(n) = \sum_{j \in \N} \epsilon_j(n)$ (where $n =  \sum_{j \in \N} \epsilon_j(n) q^j$ for $\epsilon_j(n) \in \{0, \ldots,q-1\}$) which is extensively studied in the literature, see for example \cite{drmota_Mauduit_Rivat2019normality_squares,kim1999joint_sum_of_digits,mauduit_rivat2010sum_of_digits_primes,spiegelhofer2022lower_cusick}. 
Concerning general $q$-additive functions, there is great interest in their number theoretic properties and statistical behavior, see for example \cite{coquet1980repartition_q-additives,Delange1972,fan_Konieczny2019uniformity,gelfond1968sum_of_digits,katai2002q-additive,lesigne_mauduiit1996proprietes,lesigne_mauduit_Mosse1994ergodic_q_multiplicative,mauduit_rivat2005proprietes}.

In 1970, Bésineau showed that for any $q$-additive function $f\colon \N \to \N$, any $t \geq 1$ and $m \in \Z$, the set $\{ n \in \N \colon f(n+t) - f(n) = m \}$ has density, that is the limit
\begin{equation} \label{eq besineau result}
    d(\{ n \in \N \colon f(n+t) - f(n) = m \}) = \lim_{N \to \infty } \frac{|\{ n \leq N-1 \colon f(n+t) - f(n) = m \}|}{N}
\end{equation}
exists (see \cite{besineau1970independance}). The original paper only deals with the sum-of-digits function, but the proof extends to this generality. 

We denote the densities in \eqref{eq besineau result} as $\eta_f(t,m) = d(\{ n \in \N \colon f(n+t) - f(n) = m \})$. Notice that for a fixed $t \geq 1$, $(\eta_f(t,m))_{m \in \Z}$ is a probability vector, that is $\sum_{m \in \Z} \eta_f(t,m)=1$ and $\eta_f(t,m) \geq 0$ for all $m \in \Z$. This has been explored previously in the literature (see \cite{Emme_Prikhod2017sum_of_digits,hosten_Janvresse_delaRue2024central_limit_sum_of_digits}).

Before stating our second main result, we need one definition. A subshift $X \subset \cA^\Z$ is {\em generated by a sequence} $(a(n))_{n \in \N} \subset \cA$ if $X$ consists of all the bi-infinite sequences $x \in \cA^\Z$ such that for every index $i \in \Z$ and every $k \in \N$, there is $n \in \N$ such that $(x_i, x_{i+1}, \ldots, x_{i+k}) = (a(n),a(n+1), \ldots, a(n+k)) $. The relation between the constants $\eta_f(t,m)$ from Bésineau's theorem and partial rigidity is the following.

\begin{theorem} \label{thrm partial rigidity main}
    Fix a natural number $M \geq 2$, $\cA = \{0, \ldots, M-1\}$ and a strongly $q$-additive function $f\colon \N \to \N$. If $X_{f,M} \subset \cA^\Z$ is the subshift generated by $(f(n) \mod M)_{n \in \N}$, then $(X_{f,M}, S)$ is uniquely ergodic, with invariant measure $\mu$ and for every $t \geq 1$ the constant
    $
     d(\{ n \in \N \colon f(n+t) = f(n) \pmod M \})$
    is the maximal constant of partial rigidity for the sequence $(t \cdot q^k)_{k \in \N}$. Moreover, the partial rigidity rate is given by
    \begin{equation} \label{eq main partial}
        \delta_\mu = \sup_{t \geq 1} d(\{ n \in \N \colon f(n+t) = f(n) \pmod M \}) . 
    \end{equation}
\end{theorem}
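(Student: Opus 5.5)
The plan is to realize $X_{f,M}$ as a constant length substitution subshift and then control $\mu(A\cap S^{-tq^k}A)$ through cylinder sets.

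\textbf{Substitution model.} Strong $q$-additivity gives
\[
f(aq+b)=f(a)+f(b) \quad\text{for } 0\le b<q,
\]
so $f(aq+b)\equiv f(a)+f(b)\pmod M$. Define $\sigma\colon\cA\to\cA^q$ by
\[
\sigma(i)=\bigl(i+f(0),\,i+f(1),\,\ldots,\,i+f(q-1)\bigr)\bmod M .
\]
Then $(f(n)\bmod M)_n$ is a fixed point of $\sigma$, starting from $f(0)=0$. The letters actually occurring form a subgroup-coset structure: the letters reached from $0$ are the multiples of $g=\gcd(M,f(1),\dots,f(q-1))$. If this subgroup is smaller than $\cA$, we restrict to it, and $\sigma$ becomes primitive there, since $\sigma^k(i)$ contains $i+s$ for every $s$ in the semigroup generated by the $f(b)$, which is the whole subgroup modulo $M$. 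Unique ergodicity then follows from primitivity. The frequency of a word $w$ in the fixed point equals $\mu([w])$, and Bésineau-type densities exist.

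\textbf{Lower bound along $tq^k$.} Fix a measurable $A$; by approximation it suffices to treat finite unions of cylinders. We use the recognizability and self-similar structure of constant length substitutions. Points of $X$ decompose into $\sigma^k$-blocks $\sigma^k(i)$ (towers of height $q^k$). For a point whose $\sigma^k$-block decomposition reads a letter sequence $y$, shifting by $tq^k$ moves each block to the block $t$ positions later. The key identity is
\[
\sigma^k(i+c)=\sigma^k(i)+c \pmod M,
\]
which follows from $f(aq^k+b)=f(a)+f(b)$. So if $y_{j+t}=y_j$, the $k$-th level block is literally repeated. The density of $j$ with $y_{j+t}=y_j$ is
\[
\eta:=d(\{n: f(n+t)\equiv f(n) \pmod M\}).
\]
Hence, for a cylinder $C$ of length $\ell\ll q^k$, the measure $\mu(C\cap S^{-tq^k}C)$ is at least $\eta\,\mu(C)-O(\ell/q^k)$, up to the boundary-crossing contribution. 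This extends to all measurable $A$ by the standard argument, exactly as in the Dekking–Keane approach: approximate $A$ by a union of level-$k$ cylinder/tower pieces. So $\eta$ is a constant of partial rigidity for $(tq^k)_k$.

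\textbf{Maximality and the rate.} Take $A=[a]$, a single letter, and sum over $a$:
\[
\sum_a \mu([a]\cap S^{-n}[a]) = d(\{n': x_{n'+n}=x_{n'}\}).
\]
For $n=tq^k$, by the block identity the event $f(m+tq^k)\equiv f(m)$ depends on the block index $\lfloor m/q^k\rfloor$. The only exception is carries from the lower digits, which occur on a set of density $O(\text{bounded}/q^k)$ in a way that converges; more precisely, the density is exactly a convex combination of $\eta_f(t,\cdot)$ and $\eta_f(t+1,\cdot)$ terms, with weight tending to the $t$ term. So the limit is $\eta$. If the constant $\delta$ worked for all letter cylinders, summing would give $\delta\le\eta$. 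For the partial rigidity rate, apply the general reduction from \cite{donoso_maass_radic2023partial} for constant length substitutions: the rate is attained along sequences of the form $tq^k$, with $t$ ranging over integers, after passing to subsequences. Then $\delta_\mu=\sup_t\eta$.

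\textbf{Main obstacle.} I expect the hardest step to be this last reduction, namely showing that an arbitrary partially rigid sequence $n_k$ can be compared with some $tq^{k'}$. The approach I would try is to write $n_k=t_kq^{m_k}+r_k$ with $r_k$ small relative to $q^{m_k}$ (or $t_k$ bounded). If $t_k\to\infty$, I would use the estimate that $\eta_f(t,0)$ stays bounded by the sup, via the $q$-adic expansion of $t$. I would lean on the characterization of $\delta_\mu$ via return words and $\sigma^k$-block statistics from the earlier paper.
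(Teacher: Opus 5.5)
Your outline has the same architecture as the paper: model $X_{f,M}$ by the alphabet-restricted substitution $\sigma_{f,M}$, get partial rigidity along $(tq^k)_k$ from the $\sigma^k$-tower structure, prove maximality by summing over letter cylinders, and take the rate from the complete-word characterization in \cite{donoso_maass_radic2023partial}. It is correct in substance. One sub-argument differs from, and is simpler than, the paper's, and two points need fixing.

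\textbf{Maximality.} There are no carries. For $m=aq^k+b$ with $0\le b<q^k$, strong $q$-additivity gives $f(m+tq^k)-f(m)=f(a+t)-f(a)$ exactly. Hence $\sum_a\mu([a]\cap S^{-tq^k}[a])=d(\{n\colon f(n+t)\equiv f(n)\pmod M\})$ for every $k$, not only in the limit, and no $(t+1)$-terms appear. This exact identity is the paper's \eqref{eq q^nt}. The paper gets it from the one-to-one-in-letters property plus Frid's frequency theorem; your derivation from the arithmetic of $f$ is more direct.

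\textbf{Lower bound.} The step from ``$y_{j+t}=y_j$ has density $\eta$'' to ``$\mu(C\cap S^{-tq^k}C)\ge\eta\,\mu(C)-O(\ell/q^k)$'' needs the occurrences of $C$ to be spread over the blocks $\sigma^k(a)$ independently of the event $y_0=y_t$. For sets built from level-$k$ tower pieces, as in your last sentence, one would in general only get $\min_a\mu([a]\cap S^{-t}[a])/\mu(a)$. Here it is fine, for either of two reasons.
\begin{itemize}
\item $X_{f,M}$ is invariant under $x\mapsto x+c$ for $c$ in the group of letters, so $\mu$ is too by unique ergodicity. This gives $\mu([a]\cap S^{-t}[a])=\eta\,\mu(a)$ for every letter $a$.
\item Alternatively, fix $C$, let $k\to\infty$, and use uniform frequencies of $C$ in $\sigma^k(a)$. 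The error is then $o(1)$ rather than $O(\ell/q^k)$.
\end{itemize}
The paper avoids this by citing \cite[Theorem A and Remark 3.6]{donoso_maass_radic2023partial}.

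\textbf{The rate.} What you call the main obstacle disappears once you cite the precise statement. \cref{lemma partial rigidity complete words}, a case of \cite[Theorem 7.1]{donoso_maass_radic2023partial}, gives $\delta_\mu=\lim_{n}\sup_{t\ge1}\sum_{|w|=t+1,\,w_0=w_t}\mu^{(n)}(w)$. For a single substitution every level carries the same measure $\mu$, so the limit in $n$ is trivial. The inner sum is exactly your density, so $\delta_\mu=\sup_t d(\{n\colon f(n+t)\equiv f(n)\pmod M\})$ follows in one line. You do not need to compare an arbitrary rigidity sequence with some $tq^{k'}$. Your fallback sketch would not supply that comparison anyway: bounding $\eta_f(t,0)$ by its supremum says nothing about sequences $(n_k)$ far from multiples of large powers of $q$, which is exactly what the cited theorem handles.
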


In the previous result one can rewrite $d(\{ n \in \N \colon f(n+t) = f(n) \pmod M \}) $ as $\sum_{n \in \Z} \eta_f(t, Mn)$. The subshift $X_{f,M}$ should be thought of as the orbit closure under the shift map of the sequence $(f(n) \mod M)_{n \in \N}$, with the caveat that we consider bi-infinite sequences so that the shift map is invertible. We also show a version of this theorem for general $q$-additive functions, see \cref{theorem partial rigidity q-additive}. 
The proofs of these theorems crucially use the characterization of partial rigidity rate for $\cS$-adic subshifts from \cite{donoso_maass_radic2023partial}. 

Notice that, when $f=s_2$ and $M =2$, $X_{f,M}$ is the Thue-Morse substitution subshift. Furthermore, all systems $X_{f,M}$ from \cref{thrm partial rigidity main} are substitution subshifts with specific properties described at the end of \cref{sec preliminaries}. In a previous work with Donoso and Maass, this type of substitution arose naturally (see \cite[section 7.2]{donoso_maass_radic2023partial}). They were called ``substitutions of Thue-Morse type'', but we did not recognize the $q$-additive structure hidden in the definition and hence the characterization of the partial rigidity rate in \eqref{eq main partial} stayed unknown. In that same work the partial rigidity rate of the Thue-Morse subshift was computed and, in that case, the supremum in \eqref{eq main partial} is achieved for $t=3$.

Avoiding technicalities, the main idea to deduce \cref{thrm density partial rigidity rates} from \cref{thrm partial rigidity main} is that, for a strongly $q$-additive function and a large $M$, the value $\beta_f = \sup \{ \eta_f(t,0) \colon t \geq 1\}$ approximates the value of $\delta_{\sigma_{f,M}}$  (see \cref{lemma beta_f approx delta_sigma}), where $\sigma_{f,M} $ is the substitution that generates the subshift $X_{f,M}$. The values of $\beta_f$ are easier to compute since we do not have to deal with extra terms arising from the residue classes modulo $M$. We then construct a family $\cF$ of strongly $q$-additive functions so that $\{ \beta_f \colon f \in \cF\}$ is dense in $[0,1]$ (see \cref{prop family of ex}). With this we deduce that $\{ \delta_{\sigma_{f,M}} \colon f \in \cF, M\geq2\}$ is dense in $[0,1]$ and hence conclude \cref{thrm density partial rigidity rates}. 

\textbf{Organization of the paper:} In \cref{sec preliminaries} we introduce notation and classical properties of dynamical systems and symbolic dynamics. We also introduce the substitutions that generate the space $X_{f,M}$. In \cref{sec proof first main}, we prove \cref{thrm partial rigidity main} and its version for general $q$-additive functions. In \cref{sec proof thrm 2}, we develop the argument to show \cref{thrm density partial rigidity rates}. We also include an appendix with the proof of some folklore results relating $q$-additive functions and substitution subshifts. 

\textbf{Acknowledgment:}  I would like to thank Bryna Kra for her feedback on earlier drafts of this paper, and both her and Florian Richter for the conversations that pointed me toward $q$-additive functions. I am also grateful to Andreas Mountakis for his comments and suggestions on previous drafts.

\textbf{Use of technology:} An LLM was used to help with computations of testing by trial and error values of  $f(1), \ldots,f(q-1)$ in \cref{prop family of ex} and for checking proofs and grammar. The mathematical ideas were entirely developed by the author without any use of LLMs.

\section{Preliminaries} \label{sec preliminaries}

A {\em topological dynamical system} is a pair $(X,T)$, where $X$ is a compact metric space and $T \colon X \to X$ is a homeomorphism. The {\em orbit} of a point $x\in X$ is the set $\{T^n x: n\in \Z\}$. We say that $(X,T)$ is {\em minimal} if the orbit of any point is dense in $X$. A $T$-\emph{invariant measure} $\mu$ is a Borel probability measure such that $\mu(T^{-1}A)= \mu(A)$ for all Borel sets $A \subset X$. We say that $(X,T)$ is \emph{uniquely ergodic} if it has only one invariant measure $\mu$. In that case the measure $\mu$ is \emph{ergodic}, that is for every Borel set $A \subset X$ with $\mu(A\Delta T^{-1}A)=0$, we have that $\mu(A)=0$ or $1$.

In this paper a \emph{measure preserving system} (or a {\em system} for simplicity) is a tuple $(X,\mu,T)$ where $(X,T)$ is a topological dynamical system and $\mu$ is a $T$-invariant probability measure. A system is {\em mixing} if for all Borel sets $A \subset X$, $\lim_{n \to \infty} \mu(A \cap T^{-n}A) = \mu(A)^2$ and {\em partially rigid} if there is a constant $\delta >0$ and an increasing sequence $(n_k)$ such that for all Borel sets $A \subset X$,  $\liminf_{k \to \infty} \mu(A \cap T^{-n_k}A) \geq \delta \mu(A)$. If a system is partially rigid then it is not mixing.   The {\em partial rigidity rate} of a system $(X, \mu,T)$ is 
\begin{equation} \label{eq def deltamu}
    \delta_{\mu} = \sup \{ \delta >0 \mid \delta \text{ is a partial rigidity constant of } (X,\mu,T) \text{ for some } (n_k)_{k \in \N} \}
\end{equation}
and by convention $\delta_\mu =0$ if $(X,\mu,T)$ is not partially rigid.

We are interested in symbolic systems, here referred to as subshifts, for a more detailed exposition, refer to \cite{Durand_Perrin_Dimension_groups_dynamical_systems:2022,Queffelec1987}. 

Let $\cA$ be a finite set that we call an {\em alphabet}. The elements in $\cA$ are called {\em letters} or {\em symbols}. For $\ell \in \N$, a {\em word} $w$ of length $\ell$ is a concatenation of $\ell$ symbols $w_0, \ldots , w_{\ell-1} \in \cA$ that is denoted $w = w_0 \cdots w_{\ell-1}$. The length of a word $w$ is denoted by $|w|$ and we only use    ``word'' to refer to finite words. The set of words is denoted $\cA^*$. For a word $w = w_0  \cdots w_{\ell-1}$ and two integers $0 \leq i \leq j < \ell-1$, we write $w_{[i, j+1)} = w_{[i, j]} = w_i  \cdots w_j$. We say that $u$ {\em appears} in $w $ if there is an index $ 0 \leq i \leq |w|-1$ such that $u=w_{[i,i+|u|)}$, which is denoted $u \sqsubseteq w$.

An element $x \in \cA^{\Z}$ is called a bi-infinite sequence. The notation and concepts introduced for finite words are naturally extended for bi-infinite sequences. 
As mentioned in the introduction, the {\em shift map} $S\colon \cA^{\Z} \to \cA^{\Z}$ is defined by $S((x_n)_{n \in \Z})= (x_{n+1})_{n \in \Z}$ for all $x \in \cA^\Z$ and $X \subset \cA^\Z$ is a subshift if it is compact and $S^{-1}(X)=X$. Abusing notation, we denote by $S\colon X \to X$ the shift map restricted to $X$. Notice that the topology of $X$ is generated by the clopen sets $S^{m}[w]_X$ where $m \in \Z$ and $[w]_X $ is the cylinder set $ \{ x \in X \colon x_{[0,|w|) }=w\} $ for $w \in \cA^*$. Abusing notation, for an $S$-invariant measure $\mu$ of $X$, we write $\mu(w)$ to denote $\mu([w]_X)$.

For alphabets $\cA$ and $\cB$, $\sigma\colon \cA^* \to \cB^*$ is a {\em morphism} if it is a morphism for the concatenation, that is $\sigma(uv) = \sigma(u)\sigma(v)$ for all $u,v \in \cA^*$ and when $\cA = \cB$ we say that $\sigma\colon \cA^* \to \cA^*$  is a {\em substitution}. Notice that a morphism is completely determined by the values of $\sigma(a)$ for $a \in \cA$ and for non-degenerate cases we assume $|\sigma(a)| \geq 1$ for all $a \in \cA$.  A substitution is {\em primitive} if there is $n \in \N$ such that for every letter $a,b \in \cA$, $b$ appears in $\sigma^n(a)$. 
For a substitution $\sigma \colon \cA^* \to \cA^*$, the substitution subshift given by $\sigma$ is defined as
\begin{equation*}
    X_\sigma = \{ x \in \cA^\Z \colon x_{[i,i+k)} \sqsubseteq \sigma^n(a) \text{ for every } i \in \Z, k \geq 1 \text{ and some } n \in \N, a \in \cA\}.
\end{equation*}
If the substitution is primitive, then the system $(X_\sigma,S)$ is minimal and uniquely ergodic. 

A \emph{directive sequence} $\boldsymbol \sigma = (\sigma_n\colon \cA_{n+1}^* \to \cA_n^*)_{n \in \N}$ is a sequence of morphisms between finite alphabets $\cA_n$ for all $n \in \N$. 
% In the literature directive sequences are defined more generally allowing one to take morphisms between different alphabets, but this paper does not require that level of generality. 
For $0 \leq n < N$, we denote $\sigma_{[n,N)} = \sigma_{[n,N-1]} = \sigma_n \circ \sigma_{n+1} \circ \cdots \circ \sigma_{N-1}$. 
A directive sequence $\boldsymbol \sigma = (\sigma_n \colon \cA_{n+1}^* \to \cA_n^*)_{n \in \N} $ is \emph{primitive} if for every $n \in \N$ there is $N>n$ such that for every letter $a\in \cA_N$ and $b \in \cA_n$, $b$ appears in $\sigma_{[n,N)}(a)$.  Fixing a primitive directive sequence $\boldsymbol \sigma = (\sigma_n \colon \cA_{n+1}^* \to \cA_n^*)_{n \in \N} $ and $n \in \N$ we denote
\begin{equation*}
    X_{\boldsymbol \sigma}^{(n)} = \{ x \in \cA^\Z_n \colon x_{[i,i+k)} \sqsubseteq \sigma_{[n,N)}(a) \text{ for every } i \in \Z, k \geq 1 \text{ and some } N > n, a \in \cA_{N}\}.
\end{equation*}
For every $n\in \N$, the system $(X^{(n)}_{\boldsymbol \sigma},S)$ is minimal but not necessarily uniquely ergodic. 
$X_{\boldsymbol \sigma} = X_{\boldsymbol \sigma}^{(0)}$ is the $\cS$-\emph{adic subshift} generated by the directive sequence $\boldsymbol \sigma$. Notice that a substitution subshift given by $\sigma$ is an $\cS$-adic subshift with directive sequence constantly equal to $\sigma$. 

% For a $\boldsymbol \sigma$ primitive and recognizable directive sequence (see definition in \cite{Berthe_Steiner_Thuswaldner_Recognizability_morphism:2019}), if $(X_{\boldsymbol \sigma},S)$ is uniquely ergodic, then $(X_{\boldsymbol \sigma}^{(n)},S)$ is uniquely ergodic for all $n \in \N$ (see \cite[Theorem 6.5]{Berthe_Steiner_Thuswaldner_Recognizability_morphism:2019} and \cite{Bezuglyi&Kwiatkowski&Medynets&Solomyak:2013}). For this paper we only use that if, for every  $a \in \cA$ and $n \in \N$, the first letter of $\sigma_n(a)$ is $a$ then $\boldsymbol{\sigma}$ is recognizable.  

A substitution has {\emph constant length} if there is $\ell \geq 2$ such that $|\sigma(a)| =\ell$ for all $a \in \cA$. A directive sequence $\boldsymbol \sigma = (\sigma_n\colon \cA^* \to \cA^*)_{n \in \N}$ has {\emph constant length} if every substitution $\sigma_n$ has constant length for all $n \in \N$. 
% A substitution $\sigma$ is {\em aperiodic} if $X_\sigma$ is not finite and analogously for a directive sequence.

A $q$-\emph{additive function} is a function so that $f(0)=0$ and $f(aq^\ell + b) = f(a q^\ell) + f(b)$ for all $0\leq b < q^\ell$ and $a, \ell \in \N$. If, in addition, $f(aq^\ell) =f(a)$ for all $a,\ell \in \N$ then $f$ is \emph{strongly $q$-additive}. Notice that a strongly $q$-additive function is completely determined by its values $f(i)$ for $i \in \{1, \ldots, q-1\}$. 

For a $q$-\emph{additive function}, $\alpha \in \R$ and denoting $e(t) = \exp(2\pi i t)$, we say that $n \mapsto e(f(n) \alpha)$ is a $q$-multiplicative function. Since in this work we essentially consider $\alpha = 1/M$ for an integer $M \geq 2$, we do not use the complex number notation and use the $\pmod M$ one. 
If we fix the $q$-multiplicative function $u= (f(n) \mod M)_{n \in \N}$ and we fix $\cA = u(\N)$ which is a subset of $\Z/M\Z$ that we identify with a subset of $\{0, \ldots, M-1\}$, we denote by $X_{f,M}$ generated by the sequence $(u(n))_{n \in \N}$, that is 
\begin{equation*}
    X_{f,M} = \{ x \in \cA^\Z \colon \text{ for all } i \in \Z, k \in \N \text{ there is } n \in \N \text{ with } x_{[i,i+k]} = u(n)\cdots u(n+k)\}.
\end{equation*}
By \cite[Theorem 3.9]{lesigne_mauduit_Mosse1994ergodic_q_multiplicative}, for any $q$-additive function and $M \geq 2$, $(X_{f,M},S)$ is minimal and uniquely ergodic. 

We now give an $\cS$-adic representation of $(X_{f,M},S)$. First, for $f$ strongly $q$-additive, $M \geq 2$ and as before take $\cA = u(\N)$ for $u = (f(n) \mod M)_{n \in \N}$ which is a subset of $\Z/M\Z$. Moreover, in this case, $\cA$ is a subgroup of $\Z/M\Z$. Define $\sigma_{f,M} \colon \cA^* \to \cA^*$ as
\begin{equation*}
    \sigma_{f,M}(a) = a(a+f(1) \mod M)(a+f(2) \mod M)\cdots(a+f(q-1) \mod M)
\end{equation*}
for all $a \in \cA$. Notice that $\sigma_{f,M}$ has constant length $q$.
The substitution $\sigma_{f,M}$ is primitive and, from \cref{prop subs strongly q-additive}, $X_{\sigma_{f,M}}=X_{f,M}$ (see \cref{prop q-additive mod M directive sequence}). We include \cref{prop subs strongly q-additive} in the appendix since it is considered folklore and the author did not find a proof for it. Notice that $\tau=\sigma_{s_2, 2}$ is the Thue-Morse substitution (ie $\tau(0) = 01$, $\tau(1)=10$). 

For the general $q$-additive case, the construction is similar but using directive sequences. If $f$ is $q$-additive, for every $n \in \N$, define the $q$-multiplicative function $u_n$ given by $u_n(m) = f(mq^n) \pmod M$ for $m \in \N$ and take $\cA_n = u_n(\N)$.  
Notice that $\cA_n \subset \{0, \ldots, M-1\}$, that $0 \in \cA_n$ and $\cA_{n+1} \subset \cA_{n}$ for all $n \in \N$. Define similarly the directive sequence $\boldsymbol{\sigma}=(\sigma_{n,f,M} \colon \cA_{n+1}^* \to \cA_n^*)_{n \in \N}$ by 
\begin{equation} \label{eq directive sequence}
    \sigma_{n,f,M}(a) = a(a+f(q^n) \mod M)\cdots(a+f((q-1)q^n) \mod M)
\end{equation}
for all $a \in \cA_{n+1}$.
\begin{proposition} \label{prop q-additive mod M directive sequence}
    For a $q$-additive function $f\colon \N \to \N$ and $M \geq 2$ the directive sequence $\boldsymbol{\sigma}=(\sigma_{n,f,M} \colon \cA_{n+1}^* \to \cA_n^*)_{n \in \N}$ defined in \eqref{eq directive sequence} is well defined and primitive. Moreover, $X_{\boldsymbol \sigma} = X_{f,M}$.
\end{proposition}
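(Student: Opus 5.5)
We reduce everything to one identity, proved by induction on $N>n$: for every $m\in\N$,
\begin{equation*}
    \sigma_{[n,N)}\bigl(u_N(m)\bigr) = u_n(mq^{N-n})\,u_n(mq^{N-n}+1)\cdots u_n(mq^{N-n}+q^{N-n}-1).
\end{equation*}
Well-definedness comes first. For $a=u_{n+1}(m)=f(mq^{n+1}) \bmod M$ and $0\le j\le q-1$, $q$-additivity with $b=jq^n<q^{n+1}$ gives
\begin{equation*}
f(mq^{n+1}+jq^n)=f(mq^{n+1})+f(jq^n).
\end{equation*}
Hence $a+f(jq^n)\equiv u_n(mq+j) \pmod M$, and this lies in $\cA_n=u_n(\N)$. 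So $\sigma_{n,f,M}$ maps $\cA_{n+1}$ into $\cA_n^*$. The same computation is the case $N=n+1$ of the identity: $\sigma_{n,f,M}(u_{n+1}(m))=u_n(mq)\cdots u_n(mq+q-1)$.

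For the inductive step, apply $\sigma_{[n,N)}$ to the letters of $\sigma_N(u_{N+1}(m))=u_N(mq)\cdots u_N(mq+q-1)$. The blocks then concatenate to $u_n$ evaluated on the consecutive interval $[mq^{N+1-n},(m+1)q^{N+1-n})$. The case $n=0$ says the morphisms $\sigma_{[0,N)}$ reproduce $u=u_0$ on $q$-adic blocks. In particular, since $u_N(0)=0$, the word $\sigma_{[0,N)}(0)$ is the prefix $u(0)\cdots u(q^N-1)$.

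Primitivity: fix $n$ and $b=u_n(r)\in\cA_n$. Choose $N$ with $q^{N-n}>r$. For $a=u_N(m)\in\cA_N$, the word $\sigma_{[n,N)}(a)$ equals $u_n$ on $[mq^{N-n},mq^{N-n}+q^{N-n})$. Its position $r$ carries
\begin{equation*}
f(mq^{N}+rq^n)=f(mq^N)+f(rq^n)\equiv a+b,
\end{equation*}
so every letter $a+b$ with $b\in\cA_n$ occurs. This is not yet every letter; we need $b$ itself. Here lies the main obstacle: in the non-strong case the $\cA_n$ need not be subgroups, so we cannot simply translate by $-a$. My plan is to use that $\cA_N$ is finite, and that the letters $u_N(m)$ recur. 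Concretely, take $N'$ larger still so that $q^{N'-n}$ exceeds $q^{N-n}\cdot r'$, where $r'$ is chosen with $u_N(r')=0$ at a suitable level, for instance $r'=0$. Then the image of any $a\in\cA_{N'}$ contains $\sigma_{[n,N)}(a+u_N(\cdot))$-blocks. Using the pigeonhole principle on sums in the finite group $\Z/M\Z$ (iterating $a\mapsto a+c$ returns to $a$ after at most $M$ steps, where $c$ ranges over values $f(jq^k)$), I would find a block position whose level-$N$ letter is exactly $0$, hence whose image is $u_n(0)\cdots u_n(q^{N-n}-1)$, containing $b$. I would first try picking positions $m'$ whose digits above level $N$ sum, via $q$-additivity, to $-a \bmod M$. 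This works by taking a digit pattern repeated $M$ times, $ord$ times, to cancel.

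Equality $X_{\boldsymbol\sigma}=X_{f,M}$: by the identity with $n=0$, every word $\sigma_{[0,N)}(a)$, $a=u_N(m)$, is a factor of $u$. So languages satisfy $\mathcal L(X_{\boldsymbol\sigma})\subset\mathcal L(u)$, giving $X_{\boldsymbol\sigma}\subset X_{f,M}$. Conversely, any factor $u(i)\cdots u(i+k)$ lies inside $u$ on $[0,q^N)$ for large $N$, which is $\sigma_{[0,N)}(0)$, so it is in the language defining $X_{\boldsymbol\sigma}$. Here $0\in\cA_N$ is used. Both subshifts are defined by these languages, so they coincide; minimality from \cite{lesigne_mauduit_Mosse1994ergodic_q_multiplicative} is not even needed.
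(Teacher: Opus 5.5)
\textbf{Gap in the primitivity step.} Your well-definedness argument, the block identity for $\sigma_{[n,N)}(u_N(m))$, and the language-based proof of $X_{\boldsymbol\sigma}=X_{f,M}$ are all correct. The last one is more direct than the paper's: the paper gets the inclusion $X_{\boldsymbol\sigma}\subseteq X_{f,M}$ from minimality of $X_{\boldsymbol\sigma}$ via primitivity, while you get it straight from the block identity. The gap is in primitivity. You show that every letter $a+b$ with $b\in\cA_n$ occurs in $\sigma_{[n,N)}(a)$, but you do not close the argument, and the route you sketch fails as stated. Copying the digit pattern of $m$ (where $a=u_{N'}(m)$) to other levels does not reproduce the value $a$ when $f$ is only $q$-additive, because $f(jq^k) \bmod M$ depends on the level $k$. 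So ``repeating the pattern $\mathrm{ord}(a)-1$ times'' does not produce a position carrying $-a$. Your pigeonhole remark also does not say which levels to use, or why the needed values are available inside the window $[N,N')$.

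\textbf{The missing observation.} You already have every ingredient. You computed that the letter at position $r$ of $\sigma_{[n,N)}(a)$ is $u_n(mq^{N-n}+r)$, so $a+b\in\cA_n$. Choose $N$ so that every $b\in\cA_n$ equals $u_n(r)$ for some $r<q^{N-n}$. Then $a+\cA_n\subseteq\cA_n$ for every $a\in\cA_N$. Translation by $a$ is injective on $\Z/M\Z$, so $|a+\cA_n|=|\cA_n|$, and hence $a+\cA_n=\cA_n$. Thus every $b\in\cA_n$ has the form $a+b'$ with $b'\in\cA_n$, and so occurs in $\sigma_{[n,N)}(a)$. This is exactly the paper's argument, which phrases it as $\sigma_{[n,N)}(a)=\sigma_{[n,N)}(0)\oplus a$ together with well-definedness. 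Your ``iterate until you return'' intuition also works, but on sets rather than digits. From $a+\cA_n\subseteq\cA_n$ you get $ka+\cA_n\subseteq\cA_n$ for all $k$, and taking $k=\mathrm{ord}(a)-1$ gives $\cA_n-a\subseteq\cA_n$. Neither version needs a second level $N'$ or a zero letter at level $N$.
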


\begin{proof}
    If we define $\boldsymbol{\tau}=(\tau_{n} \colon \{0,\ldots,M-1\}^* \to \{0,\ldots,M-1\}^*)_{n \in \N}$ by
    \begin{equation*} 
    \tau_{n}(a) = a(a+f(q^n) \mod M)\cdots(a+f((q-1)q^n) \mod M)
\end{equation*}
    for all $a \in \{0, \ldots,M-1\}$ and $u_n(m) = f(mq^n) \pmod M$ for $m \in \N$, then by \cref{prop subs general q-additive}, $u_n$ equals $\lim_{N \to \infty} \tau_{[n,N)}(0)$ for all $n \in \N$. Notice that the directive sequence $\boldsymbol\tau$ basically equals $\boldsymbol{\sigma}$ but in the latter at each level $n \in \N$ one restricts the alphabet to $\cA_n=u_n(\N)$. Then notice that $u_n=\tau_{n}(u_{n+1}) = \sigma_{n,f,M}(u_{n+1})$, so $\boldsymbol{\sigma}$ is well defined.    
    
    Thus,  $u_n$ equals $\lim_{N \to \infty} \sigma_{[n,N)}(0)$ (where we are using $\sigma_n = \sigma_{n,f,M}$). In particular,  for every $n\in \N$, there is $N >n$ such that for all $b \in \cA_n$, $b$ appears in $\sigma_{[n,N)}(0)$ (we are using again that $\cA_n =u_n(\N)$). By construction for all $a \in \cA_N$, $\sigma_{[n,N)}(a) = \sigma_{[n,N)}(0) \oplus a \pmod M$, where here for a word $w =w_0w_1 \cdots w_{\ell-1}$ and a letter $a$, $w\oplus a \pmod M = (w_0 + a \mod M)(w_1 + a \mod M)\cdots (w_{\ell-1} + a \mod M)$. Therefore every letter of $\cB = \{ c+a \pmod M\colon c \in \cA_n\}$ appears in $\sigma_{[n,N)}(a)$. Since $|\cB|=|\cA_n|$ and $\boldsymbol{\sigma}$ is well defined, we get that $\cB = \cA_n$ and therefore, for any $a\in \cA_N$ and any $b \in \cA_n$, $b$ appears in $\sigma_{[n,N)}(a)$, concluding the primitivity. 

    The fact that $X_{f,M}= X_{\boldsymbol{\sigma}}$ is direct from the easy contention that $X_{f,M} \subset X_{\boldsymbol \sigma}$ and the minimality of $(X_{\boldsymbol \sigma},S)$ ensured by the primitivity of the directive sequence. 
\end{proof}

We notice that restricting the alphabet is important for primitivity. For example if $f = 2s_2$ (the double of the sum-of-digits in binary), then $\sigma_{f,4} \colon \{0,2\}^* \to \{0,2\}^*$ is given by $0 \mapsto 02$ and $2 \mapsto 20$ and the substitution subshift it induces is a copy of the Thue-Morse subshift but in the alphabet $\{0,2\}$. In the naive version, where the alphabet is not restricted, $\tau \colon \{0,1,2,3\}^* \to \{0,1,2,3\}^*$ given by $0 \mapsto 02$, $1 \mapsto13$, $2 \mapsto 20$ and $3 \mapsto31$, the subshift $X_{\tau}$ is the disjoint union of two copies of the Thue-Morse substitution subshift but in the alphabets $\{0,2\}$ and $\{1,3\}$.

Notice that \cref{prop q-additive mod M directive sequence} is another proof of the fact that $X_{f,M}$ is minimal. However it is not direct from it that $X_{f,M}$ is uniquely ergodic so we rely on \cite[Theorem 3.9]{lesigne_mauduit_Mosse1994ergodic_q_multiplicative} for that fact.
Finally, we notice that some of these systems are periodic which are not interesting for studying their partial rigidity.
For example, suppose that $f\colon \N \to \N$ is a $q$-additive function for which there is $n_0 \in \N$ such that $f(iq^n)=0$ for all $n \geq n_0$ and $i\in \{1, \ldots,q-1\}$. In this case already $f$ is periodic, and notice that we end up in a degenerate case where for $\sigma_{n,f,M} \colon \cA_{n+1}^* \to \cA_n^*$ from \eqref{eq directive sequence}, $\cA_n = \{0\}$ for all $n \geq n_0$. More interestingly, if $f = s_3$, $\sigma_{s_3, 2}$ is given by $0 \mapsto010$ and $1 \mapsto 101$ which induces a two point system. 
% Since we are interested in partial rigidity, we also state the theorems under the assumption that $\sigma_{f,M}$ is aperiodic, even if they are also true in the periodic case. 

In what follows we assume that the systems are aperiodic, even though this assumption is not strictly necessary. We finish this section by noticing that, $\tau = \sigma_{f,M}$ is {\em one-to-one in letters} (ie the $i$th letter of $\tau(a)$ is different to the $i$th letter of $\tau(b)$ for $a \neq b$ and all $i \in \{0,1,\ldots,q-1\}$), so if we assume that $\tau$ is aperiodic \cite[Theorem 2]{lemanczyk_mentzen1988metric} together with \cite[Theorem 7]{dekking1978spectrum_substitutions} ensure that $\delta_\tau < 1$ (see also \cite[Proposition 7.11]{donoso_maass_radic2023partial}). 

\section{Partial rigidity rate for subshift generated by $q$-additive functions} \label{sec proof first main}

To compute the partial rigidity rate we use the following special case of \cite[Theorem 7.1]{donoso_maass_radic2023partial}.
Similarly to the introduction, if a directive sequence induces a uniquely ergodic $\cS$-adic subshift $(X_{\boldsymbol{\sigma}}, \mu,S)$ then we denote by $ \delta_{\boldsymbol \sigma}$ the partial rigidity rate. 

\begin{lemma} \label{lemma partial rigidity complete words}
    Let $\boldsymbol \sigma = (\sigma_n \colon \cA^* \to \cA^*)_{n \in \N}$ be an aperiodic primitive and recognizable and constant length directive sequence with $X_{\boldsymbol \sigma}$ uniquely ergodic. Denote $\mu^{(n)}$ the unique invariant measure at each level $n \in \N$. Then
    \begin{equation*}
        \delta_{\boldsymbol \sigma} = \lim_{n \to \infty} \sup_{t \geq 1} \bigg( \sum_{w \in \cA^* \colon |w|=t+1, w_0 = w_t} \mu^{(n)}(w) \bigg). 
    \end{equation*}
\end{lemma}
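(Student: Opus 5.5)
This lemma is presented as a special case of \cite[Theorem 7.1]{donoso_maass_radic2023partial}. The plan is therefore to recall that theorem and check that its hypotheses and its formula specialize correctly to constant length directive sequences. \cite[Theorem 7.1]{donoso_maass_radic2023partial} states the following for a primitive, recognizable, uniquely ergodic $\cS$-adic subshift of finite alphabet rank. The partial rigidity rate equals the limit, as $n\to\infty$, of the supremum over words $w$ of the measure $\mu^{(n)}$ of the set of \emph{complete words} of length $|w|$, lifted to level $0$ via $\sigma_{[0,n)}$ and normalized. Roughly, this quantity is $\sum_{w} \mu^{(n)}(w)\,|\sigma_{[0,n)}(w_{[0,|w|-1)})|$ over words $w=w_0\cdots w_t$ with $w_0=w_t$ and a fixed return-length profile, divided by the appropriate length. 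The proof proposal is to rewrite this expression when all the morphisms have constant length.

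The first step is to verify the hypotheses. The alphabet is constantly $\cA$, so the alphabet rank is finite. Primitivity, recognizability and unique ergodicity are assumed, and aperiodicity is assumed as well. Hence \cite[Theorem 7.1]{donoso_maass_radic2023partial} applies.

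The second step is to simplify the lengths. If every $\sigma_k$ has constant length $\ell_k$, then $|\sigma_{[0,n)}(u)| = |u|\,\ell_0\cdots\ell_{n-1}$ for every word $u$. So the lengths of the level-$0$ images of words of a given length $t$ are all equal to $t\,h_n$, where $h_n=\ell_0\cdots\ell_{n-1}$. In the general formula one takes, for each candidate return time, the total mass of the complete words whose images have the given length. In constant length this constraint on lengths becomes simply $|w|=t+1$. The normalization, namely the ratio between the frequency of $\sigma_{[0,n)}$-images and the measure $\mu^{(n)}$, cancels because all words have the same image length. Concretely, using recognizability, $\mu(\sigma_{[0,n)}([w]_{X^{(n)}})\text{-tower}) = h_n\,\mu^{(n)}(w)/h_n$. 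The sum then collapses exactly to $\sum_{|w|=t+1,\,w_0=w_t}\mu^{(n)}(w)$. The partial rigidity sequence realizing a given $t$ is $t\,h_n$.

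The main obstacle is the bookkeeping in matching the normalization of the general theorem, which is stated with heights of Kakutani--Rokhlin towers. One must check that the factor $|\sigma_{[0,n)}(a)|\mu^{(n)}$-mass of the towers equals $1$. This holds in constant length, since $\sum_a h_n\,\mu(\text{base}_a)=1$ and $\mu^{(n)}(a)=h_n\,\mu(\text{base}_a)$. One must also check that the existence of the limit in $n$ is inherited from the general statement, where the expression is nondecreasing in $n$. Once this is done, the displayed formula follows directly.
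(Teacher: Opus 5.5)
Your proposal follows the same route as the paper, which gives no argument beyond invoking \cite[Theorem 7.1]{donoso_maass_radic2023partial} as a special case. Your constant-length bookkeeping is the right specialization: the base $\sigma_{[0,n)}([w]_{X^{(n)}})$ has mass $\mu^{(n)}(w)/h_n$, the tower has $h_n$ levels, and all complete words of length $t+1$ share the return time $t h_n$. One small correction: the level-$n$ quantity is nonincreasing in $n$, not nondecreasing. For $y$ with $y_0=y_t$, every point $S^i\sigma_n(y)$ with $0\le i<\ell_n$ has equal coordinates at $0$ and $\ell_n t$, which gives $\sum_{|v|=\ell_n t+1,\, v_0=v_{\ell_n t}}\mu^{(n)}(v)\ge\sum_{|u|=t+1,\, u_0=u_t}\mu^{(n+1)}(u)$. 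The limit exists either way.
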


The words $w=w_0\cdots w_t$ that start and finish with the same letter are called {\em complete words}. \cref{lemma partial rigidity complete words} asserts that complete words control partial rigidity. Recall that for a $q$-additive function we associate a directive sequence $(\sigma_{f_n,M} \colon \cA^* \to \cA^*)_{n \in \N}$ (see \cref{sec preliminaries}). In this paper we do not define the notion of recognizability but we use that if the directive sequence $\boldsymbol \sigma =(\sigma_n \colon \cA_{n+1}^* \to \cA^*_n)_{n \in \N}$ (with $\cA_{n+1} \subset \cA_n$ $n \in \N$) is aperiodic and satisfies that the first letter of the word $\sigma_n(a)$ is $a$, for all $a \in \cA_{n+1}$ and $n \in \N$, then $\boldsymbol{\sigma}$ is recognizable, see \cite[Theorem 3.1]{Berthe_Steiner_Thuswaldner_Recognizability_morphism:2019}.

\begin{theorem} \label{theorem partial rigidity q-additive}
    Let $f \colon \N \to \N$ be a $q$-additive sequence, $M \geq 2$ and $(X_{f,M}, \mu,S)$ the minimal and uniquely ergodic subshift generated by $(f(n) \mod M)_{n \in \N}$. Then 
    \begin{equation} \label{eq partial rigid por general q-additive}
        \delta_{\mu} = \lim_{n \to \infty} \sup_{t \geq 1} d(\{m \in \N \colon f(q^n(m+t)) =f(q^nm) \pmod M \}). 
    \end{equation}
\end{theorem}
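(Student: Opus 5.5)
The plan is to apply \cref{lemma partial rigidity complete words} to the directive sequence $\boldsymbol\sigma = (\sigma_{n,f,M})_{n\in\N}$ of \eqref{eq directive sequence}. By \cref{prop q-additive mod M directive sequence} it is primitive with $X_{\boldsymbol\sigma}=X_{f,M}$, and it has constant length $q$. Each $\sigma_{n,f,M}(a)$ starts with $a$, so under the standing aperiodicity assumption it is recognizable by \cite[Theorem 3.1]{Berthe_Steiner_Thuswaldner_Recognizability_morphism:2019}. The lemma is stated for a fixed alphabet $\cA$, while here $\cA_{n+1}\subset\cA_n$. I will handle this either by noting that the proof of \cite[Theorem 7.1]{donoso_maass_radic2023partial} does not use a fixed alphabet, or by viewing all morphisms on $\{0,\ldots,M-1\}$ while keeping the restricted level subshifts $X^{(n)}_{\boldsymbol\sigma}$.

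The first key step is to identify the level subshifts. By the same argument as in \cref{prop q-additive mod M directive sequence} applied to the shifted sequence $(\sigma_{n+k,f,M})_{k\in\N}$, we get $X^{(n)}_{\boldsymbol\sigma} = X_{f_n,M}$, the subshift generated by $u_n(m)=f(mq^n) \bmod M$. Here $f_n(m)=f(mq^n)$, and $f_n$ is itself $q$-additive, since $f_n(aq^\ell+b)=f(aq^{\ell+n}+bq^n)=f(aq^{\ell+n})+f(bq^n)$ for $bq^n<q^{\ell+n}$. Hence $X^{(n)}_{\boldsymbol\sigma}$ is uniquely ergodic by \cite[Theorem 3.9]{lesigne_mauduit_Mosse1994ergodic_q_multiplicative}, with measure $\mu^{(n)}$.

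The second step computes the complete-word sum. Since $\mu^{(n)}$ is the unique invariant measure and $u_n$ is a point of (the one-sided version of) the minimal system, unique ergodicity gives that frequencies of cylinders along $u_n$ exist and equal their measures:
\[
\mu^{(n)}(w) = d(\{m\in\N \colon u_n(m)\cdots u_n(m+t) = w\}).
\]
Strictly, this needs uniform convergence of Birkhoff averages of indicator functions of clopen sets, applied to any point of $X^{(n)}_{\boldsymbol\sigma}$ extending $u_n$ to the left. Summing over all $w$ of length $t+1$ with $w_0=w_t$ is a finite sum, so the densities add and
\[
\sum_{|w|=t+1,\ w_0=w_t}\mu^{(n)}(w) = d(\{m \colon f(q^n(m+t)) \equiv f(q^nm) \pmod M\}).
\]
Substituting into \cref{lemma partial rigidity complete words} yields \eqref{eq partial rigid por general q-additive}, with $\delta_{\boldsymbol\sigma}=\delta_\mu$ since $X_{\boldsymbol\sigma}=X_{f,M}$.

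The main obstacle is verifying the hypotheses of \cref{lemma partial rigidity complete words} in the varying-alphabet setting. Aperiodicity must hold at every level, but it follows from aperiodicity of $X_{f,M}$ because $X^{(n)}$ is a substitutive image. Recognizability must also hold. The frequency identification is then routine given unique ergodicity at every level.
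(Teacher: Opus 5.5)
Your proposal is correct and is essentially the paper's proof: apply \cref{lemma partial rigidity complete words} to the directive sequence \eqref{eq directive sequence}, identify $X^{(n)}_{\boldsymbol\sigma}$ with the subshift generated by $u_n(m)=f(q^nm) \bmod M$, read $\mu^{(n)}(w)$ as the frequency of $w$ along $u_n$ via unique ergodicity, and sum over complete words; your extra checks ($f_n$ is $q$-additive, unique ergodicity at each level, the varying alphabets) are details the paper leaves implicit. One small slip: aperiodicity of $X^{(n)}_{\boldsymbol\sigma}$ holds because $X_{f,M}=\bigcup_{k<q^n}S^k\sigma_{[0,n)}(X^{(n)}_{\boldsymbol\sigma})$ is the image of the level-$n$ subshift, so a periodic $X^{(n)}_{\boldsymbol\sigma}$ would force $X_{f,M}$ to be periodic; it is not because $X^{(n)}_{\boldsymbol\sigma}$ is itself a substitutive image.
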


\begin{proof} Take the directive sequence $\boldsymbol{\tau} = (\sigma_{n,f,M} \colon \cA_{n+1}^* \to \cA_n^*)_{n \in \N}$ from \eqref{eq directive sequence} and for $n \in \N$, the sequence $m \mapsto u_n(m) = f(q^n m) \pmod M$. Using \cref{prop q-additive mod M directive sequence} we get that $X_{\boldsymbol \tau}^{(n)}$ is the subshift generated by the sequence $(u_n(m))_{m \in \N}$ and denote $\mu^{(n)}$ its unique invariant measure.

    For a fixed $n \in \N$, consider $x \in X_{\boldsymbol \tau}^{(n)}$ such that $x_i = u_n(i)$ for all $i \in\N$. Thus, for every  $w =w_0w_1 \cdots w_t \in \cA^*_n$ 
    \begin{align*}
        \mu^{(n)}(w) &= \lim_{N \to \infty} \frac{1}{N} \sum_{j=0}^{N-1} \1_{ [w]_{X^{(n)}_{\boldsymbol{\tau}}}}(S^jx ) = d(\{m \in \N \colon u_n(m)=w_0, \ldots, u_n(m+t) = w_t \}).
    \end{align*}
     Finally, summing the previous expression over $\{w \in \cA^*_n \colon |w|=t+1, w_0 = w_t\}$ we get
    \begin{align*}
       &\sum_{w \in \cA^*_n \colon |w|=t+1, w_0 = w_t} d(\{m \in \N \colon u_n(m +i)=w_i ,\text{ for } i=0,\ldots,t\}) \\
       =&\sum_{w \in \cA^*_n \colon |w|=t+1, w_0 = w_t} d(\{m \in \N \colon f(q^n(m +i))=w_i \pmod M ,\text{ for } i=0,\ldots,t\}) \\
       =&d(\{m \in \N \colon f(q^n(m +t))=f(q^nm) \pmod M\}).
    \end{align*}
    And then we conclude with \cref{lemma partial rigidity complete words}.
\end{proof}

 We finish this section by proving \cref{thrm partial rigidity main}. For this we state the following lemma.

\begin{lemma} \label{lemma direct from Theorem A}
    Let $\sigma \colon \cA^* \to \cA^*$ be a primitive one-to-one in letters substitution with constant length $q$ and let $(X_\sigma,\mu,S)$ be the substitution subshift generated by $\sigma$. Then for every $t \geq 1$
    \begin{equation*}
         \eta_\sigma(t) =\sum_{w \in \cA^* \colon |w|=t+1, w_0 = w_t} \mu(w)  
    \end{equation*}
    is a constant of partial rigidity for the sequence $(t \cdot q^n)_{n \in \N}$. Moreover for that sequence $\eta_\sigma(t)$ is the maximal constant of partial rigidity.  
\end{lemma}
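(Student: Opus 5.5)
The plan is to split the proof into the lower bound, that $\eta_\sigma(t)$ is a constant of partial rigidity along $(tq^n)_{n\in\N}$, and the upper bound, that no larger constant works along this sequence. Both rest on the level-$n$ tower structure of $X_\sigma$. Since $\sigma$ is primitive and one-to-one in letters, it is recognizable: the first letters of $\sigma(a)$, $a \in \cA$, are pairwise distinct, and I will assume, as in the rest of the paper, that $\sigma$ is aperiodic. Hence every $x \in X_\sigma$ has a unique decomposition $x = S^j \sigma^n(y)$ with $y \in X_\sigma$ and $0\le j<q^n$. I will call $j$ the offset of $x$ and $y_0$ its level-$n$ letter. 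The sets $B^{(n)}_{j,a}=S^j\sigma^n([a])$ form a clopen partition of $X_\sigma$. By unique ergodicity and constant length,
\[
\mu\bigl(S^j\sigma^n([w])\bigr)=\mu(w)/q^n
\]
for every word $w$. The basic observation is that $x$ and $S^{tq^n}x$ have the same offset $j$. If $y=y_0y_1\cdots$, then the level-$n$ letter of $S^{tq^n}x$ is $y_t$. Therefore $x\in B^{(n)}_{j,a}$ and $S^{tq^n}x\in B^{(n)}_{j,a}$ hold simultaneously exactly when $x\in S^j\sigma^n([w])$ for some complete word $w$ of length $t+1$ with $w_0=w_t=a$.

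\emph{Upper bound.} Suppose $\delta$ is a constant of partial rigidity along $(tq^n)_{n\in\N}$. I apply the definition to each cylinder $[b]$, $b\in\cA$, and sum over $b$:
\[
\liminf_{n \to \infty} \sum_{b} \mu\bigl([b]\cap S^{-tq^n}[b]\bigr)\ \ge\ \delta \sum_b \mu(b) = \delta .
\]
The letter $x_0$ of a point with offset $j$ is the $j$-th letter of $\sigma^n(y_0)$. Since $\sigma^n$ is again one-to-one in letters, once the offset is fixed we have $x_0=(S^{tq^n}x)_0$ if and only if $y_0=y_t$. Summing over $j$ and over complete words, this gives
\[
\sum_b \mu\bigl([b]\cap S^{-tq^n}[b]\bigr)=\sum_{j<q^n}\ \sum_{|w|=t+1,\ w_0=w_t}\frac{\mu(w)}{q^n}=\eta_\sigma(t)
\]
for every $n$. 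Hence $\delta\le\eta_\sigma(t)$, which is the maximality claim.

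\emph{Lower bound.} Fix a measurable $A$ and $\varepsilon>0$. By a standard approximation I may assume that $A$ is a finite union of atoms $B^{(m)}_{i,c}$ for some fixed level $m$. This approximation costs at most $2\varepsilon$ in $\mu(A\cap S^{-tq^n}A)$, uniformly in $n$, so it is harmless. Now take $n\gg m$. A point $x$ with offset $j$ at level $n$ and level-$n$ letter $a$ has its level-$m$ atom determined by $a$ and by $j$: it is fixed by the position $\lfloor j/q^m\rfloor$ inside $\sigma^{n-m}(a)$, together with $j \bmod q^m$. Whenever the level-$n$ letters of $x$ and $S^{tq^n}x$ agree, the two points therefore lie in the same level-$m$ atom. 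This yields
\[
\mu(A\cap S^{-tq^n}A)\ \ge\ \sum_{a}\ \sum_{j<q^n}\ \frac{1}{q^n}\,\1_{A}\bigl(\text{atom of }(j,a)\bigr)\sum_{|w|=t+1,\ w_0=w_t=a}\mu(w).
\]

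\emph{Main obstacle.} The remaining step is to show that the right-hand side is $\mu(A)\,\eta_\sigma(t)+o(1)$ as $n\to\infty$. This amounts to showing that the proportion of offsets $j<q^n$ whose level-$m$ atom lies in $A$, computed inside the single word $\sigma^n(a)$, tends to $\mu(A)$ uniformly in $a$. That proportion is the frequency of the appropriate level-$m$ letters in $\sigma^{n-m}(a)$, weighted by the residue $j \bmod q^m$. By primitivity and Perron--Frobenius, letter frequencies in $\sigma^{k}(a)$ converge to the frequency vector of $\mu$ independently of $a$ as $k\to\infty$. This gives the required asymptotic independence between the level-$n$ letter and the level-$m$ atom. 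Taking the $\liminf$ in $n$ and then letting $\varepsilon\to0$ gives $\liminf_n\mu(A\cap S^{-tq^n}A)\ge\eta_\sigma(t)\mu(A)$. I expect the care to lie in this uniform equidistribution step, namely in checking that the $q^m$ residues together with the frequencies in $\sigma^{n-m}(a)$ reproduce $\mu(A)$ exactly in the limit.
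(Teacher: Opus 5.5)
Your argument is correct. It differs from the paper's mainly in the lower bound, which the paper does not prove but takes from \cite[Theorem A and Remark 3.6]{donoso_maass_radic2023partial}.

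\textbf{Lower bound.} You reprove that special case directly from the towers $S^j\sigma^n([a])$. Since $x$ and $S^{tq^n}x$ have the same level-$n$ offset, equal level-$n$ letters force the same level-$m$ atom. The step you flag as the main obstacle closes in one line. If $A=\bigcup_{(r,c)\in I}S^r\sigma^m([c])$, the proportion of offsets $j<q^n$ whose level-$m$ atom lies in $A$ is exactly
\[
\frac{1}{q^m}\sum_{(r,c)\in I}\frac{|\sigma^{n-m}(a)|_c}{q^{n-m}}\ \longrightarrow\ \frac{1}{q^m}\sum_{(r,c)\in I}\mu(c)=\mu(A),
\]
where $|v|_c$ is the number of occurrences of $c$ in $v$. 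Uniformity in $a$ is automatic since $\cA$ is finite.

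\textbf{Maximality.} Both proofs test the inequality on letter cylinders and sum. They differ in how $\sum_a\mu([a]\cap S^{-tq^n}[a])=\eta_\sigma(t)$ is obtained.
\begin{itemize}
\item The paper proves a one-step identity between complete words of lengths $\ell+1$ and $q\ell+1$, using one-to-one in letters and Frid's frequency theorem, and then iterates it.
\item You get the identity for all $n$ at once. You use the unique decomposition $x=S^j\sigma^n(y)$, the fact that $\sigma^n$ is again one-to-one in letters, and $\mu(S^j\sigma^n([w]))=\mu(w)/q^n$.
\end{itemize}
Your route is self-contained and shows that one-to-one in letters is used only for maximality. The paper's route is shorter because it leans on the general $\cS$-adic theorem.

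\textbf{Two details to state correctly.}
\begin{itemize}
\item Recognizability comes from aperiodicity (Moss\'e), not from the first letters being distinct. For example, $0\mapsto010$, $1\mapsto101$ has distinct first letters but is not recognizable. So the standing aperiodicity assumption is genuinely used.
\item The tower partitions generate the Borel $\sigma$-algebra only modulo $\mu$. For Thue--Morse, two points of $\bigcap_m\sigma^m(X_\sigma)$ with the same right half share an atom at every level. This is still enough for your approximation step, since the points within distance $L$ of the bottom or top of the level-$m$ tower have measure $2L/q^m$.
\end{itemize}
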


\begin{proof}
    The fact that $\eta_\sigma(t)$ is a constant of partial rigidity for $(t \cdot q^n)_{n \in \N}$ is a direct consequence of \cite[Theorem A and Remark 3.6]{donoso_maass_radic2023partial}. To see that it is maximal for that sequence, we first show that for any $\ell \geq 1$
    \begin{equation} \label{eq qell +1 = ell +1}
        \sum_{u \in \cA^* \colon |u|=\ell+1, u_0 = u_{\ell}} \mu(u) = \sum_{w \in \cA^* \colon |w|=q\ell+1, w_0 = w_{q\ell}} \mu(w).
    \end{equation}
    Indeed, fix $u \in \cA^*$ of length $\ell+1$, if $v = \sigma(u)$ and $w^{(i)}= v_{[i,\ell q+i]}$ for all $i \in \{0,\ldots, q-1\}$, then $w^{(i)}$ is a complete word for all $i \in \{0, \ldots,q-1\}$ if and only if $u$ is a complete word. This is direct from the fact that the substitution is one-to-one in letters, since $u_0 = u_\ell$ if and only if $w^{(i)}_0=[\sigma(u_0)]_i = [\sigma(u_\ell)]_i=w^{(i)}_{q\ell}$ for all $i \in \{0, \ldots,q-1\}$. 
    This together with \cite[Theorem 3]{Frid_frequency_D0L:1998} establishes \eqref{eq qell +1 = ell +1}. 
    
    Using \eqref{eq qell +1 = ell +1} inductively, we get that for all $t \geq 1$,
    \begin{equation} \label{eq q^nt}
        \eta_\sigma(t) =\sum_{u \in \cA^* \colon |u|=t+1, u_0 = u_{t}} \mu(u) = \sum_{w \in \cA^* \colon |w|=q^n t+1, w_0 = w_{q^nt}} \mu(w).
    \end{equation}
    Now, suppose for a contradiction that there are constants $\delta'>\delta>\eta_\sigma(t)$ such that for any measurable set $A \subset X_\sigma$, $\liminf_{n \to \infty} \mu(A \cap S^{t \cdot q^n}A) \geq \delta' \mu(A)$. In particular there is $n \in \N$ sufficiently large so that for all $a \in \cA$, $\mu([a]_X \cap S^{t \cdot q^n}[a]_X) \geq \delta \mu(a)$.
    Notice that 
    \begin{equation*}
        \mu([a]_X \cap S^{t \cdot q^n}[a]_X) = \sum_{w \in \cA^* \colon |w|=q^nt+1, w_0 = w_{q^nt}=a} \mu(w).
    \end{equation*}
    Therefore,
    \begin{equation*}
        \sum_{a\in \cA} \mu([a]_X \cap S^{t \cdot q^n}[a]_X) =  \sum_{w \in \cA^* \colon |w|=t\cdot q^n+1, w_0 = w_{q^nt}} \mu(w)  = \eta_\sigma(t) \geq \sum_{a\in \cA} \delta \mu(a) = \delta, 
    \end{equation*}
    which is a contradiction. 
\end{proof} 

\begin{proof}[Proof of \cref{thrm partial rigidity main}]
    The first part is direct from \cref{lemma direct from Theorem A} for $\sigma_{f,M}$ together with the fact that, similarly to the proof of \cref{theorem partial rigidity q-additive},
    \begin{equation*}
        \eta_{\sigma_{f,M}} (t) = d(\{ n \in \N \colon f(n+t) =f(n) \pmod M  \}).
    \end{equation*}
    Using that a strongly $q$-additive sequence $f$ is in particular a $q$-additive sequence, from \cref{theorem partial rigidity q-additive} and using that $f(q^nm) = f(m)$, there is no dependency on $n$ for \eqref{eq partial rigid por general q-additive} so one directly deduces that 
    \begin{equation*} 
        \delta_{\sigma_{f,M}} =  \sup_{t \geq 1} d(\{m \in \N \colon f(m+t) =f(m) \pmod M\}). \qedhere
    \end{equation*}
\end{proof}

\section{Proof of \cref{thrm density partial rigidity rates}} \label{sec proof thrm 2}

In this section we only consider functions $f$ that are strongly $q$-additive. For $t \geq 1$ and $m \in \Z$, we use the following notation
\begin{align*}
    \eta_f(t,m) &=  d( \{ n \in \N \colon f(n+t) -f(n) = m \})\\
    \alpha_f &= \sup \{ d( \{ n \in \N \colon f(n+t) - f(n) = m \}) \ \colon \ t \geq 1, m \in \Z\}\\
    \beta_f &= \sup\{ d( \{ n \in \N \colon f(n+t) = f(n) \}) \ \colon  \ t \geq 1\}. 
\end{align*}
In other words $\alpha_f = \sup \{ \eta_f(t,m) \colon t \geq 1, m \in \Z\}$ and $\beta_f = \sup\{ \eta_f(t,0) \colon t \geq 1\}$. As mentioned in the introduction, $\eta_f(t,m)$ is well defined due to Bésineau's theorem \cite{besineau1970independance}. Before proving \cref{thrm density partial rigidity rates}, we construct a family of strongly $q$-additive functions where we can control the value of $\beta_f$ (see \cref{prop family of ex}). We need the following lemma,

\begin{lemma} \label{lemma max beta_f}
    Let $f$ be a strongly $q$-additive function, then 
    \begin{equation} \label{eq max beta_f}
        \alpha_f =  \sup \{ \eta_f (s,n) \colon s \in \{ 1, \ldots, q-1\}, n\in \Z \}.
    \end{equation}
     Moreover, the supremum in the previous equation is a maximum as well as the supremum that defines $\alpha_f$. In particular, $\beta_f \leq \max \{ \eta_f (s,n) \colon s \in \{ 1, \ldots, q-1\}, n\in \Z \}$. 
\end{lemma}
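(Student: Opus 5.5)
The plan is to reduce every shift $t \geq 1$ to smaller shifts, using the digit decomposition $n = qn' + r$, and then run a strong induction on $t$. First note that for each $s\in\{1,\ldots,q-1\}$ the vector $(\eta_f(s,n))_{n\in\Z}$ is a probability vector. So $\sup_n \eta_f(s,n)$ is attained, since only finitely many entries can exceed any given positive number. Taking the maximum over the finitely many $s$ gives a maximum $A := \max\{\eta_f(s,n)\colon 1\le s\le q-1,\ n\in\Z\}$, attained at some $(s_0,n_0)$. Clearly $A \le \alpha_f$. It therefore suffices to show $\eta_f(t,m)\le A$ for all $t\geq 1$ and $m\in\Z$. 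This also gives that the supremum defining $\alpha_f$ is attained at $(s_0,n_0)$, and that $\beta_f \le \alpha_f = A$ since $\eta_f(t,0)\le \alpha_f$.

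\textbf{Key identity.} Write $t = qt' + s$ with $0\le s<q$, and split $n = qn'+r$ with $0\le r<q$. Strong $q$-additivity gives $f(qa+b) = f(a) + f(b)$ for $0\le b<q$; in particular $f(q)=f(1)$. Hence:
\begin{itemize}
\item If $r+s<q$, then $f(n+t)-f(n) = \big(f(n'+t')-f(n')\big) + \big(f(r+s)-f(r)\big)$.
\item If $r+s\ge q$, then $n+t = q(n'+t'+1) + (r+s-q)$, so $f(n+t)-f(n) = \big(f(n'+t'+1)-f(n')\big) + \big(f(r+s-q)-f(r)\big)$.
\end{itemize}
The set of $n\equiv r \pmod q$ satisfying a condition on $n'$ has density $\tfrac1q$ times the density of the corresponding set of $n'$; these densities exist by Bésineau's theorem. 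Therefore
\begin{equation*}
\eta_f(t,m) = \frac1q\sum_{r=0}^{q-1-s}\eta_f\big(t',m-c_r\big) + \frac1q\sum_{r=q-s}^{q-1}\eta_f\big(t'+1,m-c'_r\big),
\end{equation*}
with constants $c_r = f(r+s)-f(r)$ and $c'_r = f(r+s-q)-f(r)$. Here we use the convention $\eta_f(0,k)=\1_{k=0}$. When $s=0$ the identity collapses to $\eta_f(qt',m)=\eta_f(t',m)$.

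\textbf{Induction.} We argue by strong induction on $t\ge1$. The base cases $1\le t\le q-1$ hold by the definition of $A$. Now let $t\ge q$, so $t'\ge1$.
\begin{itemize}
\item If $s=0$, then $\eta_f(t,m)=\eta_f(t',m)$ with $1\le t'<t$, so $\eta_f(t,m)\le A$ by induction.
\item If $s\ne0$, every term on the right-hand side has shift $t'$ or $t'+1$. Both are at least $1$, and $t'+1 < qt'+s = t$ because $q\ge2$ and $s\ge1$. So each term is at most $A$ by the inductive hypothesis, and an average of $q$ numbers each at most $A$ is at most $A$.
\end{itemize}

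\textbf{Main obstacle.} The only delicate point is excluding the degenerate shift $t'=0$, for which $\eta_f(0,\cdot)$ can equal $1$ and would break the bound. This is exactly why the cases $t<q$ are taken as the base of the induction rather than being treated by the recursion. The remaining care is bookkeeping: checking the carry case $r+s\ge q$ correctly via $f(q(n'+t'+1)+b)=f(n'+t'+1)+f(b)$, and justifying that the densities split over residue classes, which follows from the existence of the limits.
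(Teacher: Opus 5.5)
Your proposal is correct and follows the paper's proof: the paper uses the same digit-splitting identity, writing $\eta_f(qt'+s,m)$ as a $\frac1q$-average of values $\eta_f(t',\cdot)$ and $\eta_f(t'+1,\cdot)$, inducts down to shifts in $\{1,\ldots,q-1\}$, and obtains attainment from the fact that each $(\eta_f(s,n))_{n\in\Z}$ is a probability vector. The only difference is presentational: you bound each term by $A$ directly rather than tracking convex combinations, and you make explicit the well-foundedness check $t'+1<t$ and the $s=0$ case, which the paper leaves implicit.
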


\begin{proof}
    There is one inequality in \eqref{eq max beta_f} that is clear. To prove the converse inequality we actually prove that for every $m \in \Z$, $t\geq 1$, $\eta_f(t,m)$ is the convex combination of elements of $\{ \eta_f (s,n) \colon s \in \{ 1, \ldots, q-1\}, n\in \Z \}$. This fact is direct from applying inductively the following identity, for $0 \leq i < q$ and $s\geq 1$ consider $t= qs+i$ 
    \begin{align*}
        \eta_f(qs+i, m) = d( \{ n \in \N \colon f(n+qs+i) - f(n) = m \})& \\
        = \frac{1}{q} \sum_{j=0}^{q-1} d( \{ n \in \N \colon f(qn+qs+i+j) - f(qn+j) = m \})& \\
        = \frac{1}{q} \bigg(\sum_{j=0}^{q-1-i} \eta_f ( s,  f(j) -f(i+j) + m ) + \sum_{j=q-i}^{q-1} \eta_f(s+1, f(j) - f(i+j-q) + m)\bigg)&
    \end{align*}
    where the last equality is a consequence of the strong $q$-additivity of $f$. 
    
    To see that the supremum in the right hand side of \eqref{eq max beta_f} is a maximum, fix $s \in \{ 1, \ldots, q-1\}$ and notice that $(\eta_f(s,n))_{n \in \Z}$ is a probability vector, hence $\sup \{ \eta_f (s,n) \colon  n\in \Z \} = \max \{ \eta_f (s,n) \colon  n\in \Z \} $. Since $\{ 1, \ldots, q-1\}$ is a finite set, we conclude that the right-hand side of \eqref{eq max beta_f} is attained. By the previous part, the same holds for the left-hand side.
\end{proof}

\begin{proposition}  \label{prop family of ex}
    Let $q$ and $k$ be integers such that $q \geq 5$ and $2 \leq k \leq q-3$.  Let $f$ be the strongly $q$-additive function given by 
    \begin{equation}
        f(n) = \begin{cases}
            \begin{array}{cc}
                 0 & \text{ for } 1 \leq n \leq k  \\
                2^{n-k}-1 & \text{ for } k < n \leq q-1 ,
            \end{array}
        \end{cases}
    \end{equation}
    then $\beta_f = k/q$. Moreover, $\beta_f = \alpha_f = \eta_f(1,0)$.
\end{proposition}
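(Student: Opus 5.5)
The plan is to squeeze $\beta_f$ between two bounds using \cref{lemma max beta_f}. Trivially $\eta_f(1,0)\le\beta_f\le\alpha_f$. The lemma gives $\alpha_f=\max\{\eta_f(s,m)\colon 1\le s\le q-1,\ m\in\Z\}$. So it suffices to prove two things:
\begin{itemize}
\item[(i)] $\eta_f(1,0)=k/q$;
\item[(ii)] $\eta_f(s,m)\le k/q$ for every $s\in\{1,\ldots,q-1\}$ and $m\in\Z$.
\end{itemize}
Together these give $k/q=\eta_f(1,0)\le\beta_f\le\alpha_f\le k/q$, which is the whole statement.

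The basic tool is a one-step carry decomposition, the same as in the proof of \cref{lemma max beta_f}. Write $n=qa+j$ with $0\le j<q$ and fix $1\le s\le q-1$. If $j+s<q$, strong $q$-additivity gives $f(n+s)-f(n)=f(j+s)-f(j)$. If $j+s\ge q$, it gives $f(n+s)-f(n)=f(a+1)-f(a)+f(j+s-q)-f(j)$. Hence
\begin{equation*}
\eta_f(s,m)=\frac1q\Big(N_s(m)+\sum_{j=q-s}^{q-1}\eta_f\big(1,\,m+f(j)-f(j+s-q)\big)\Big),
\end{equation*}
where $N_s(m)=|\{0\le j\le q-1-s\colon f(j+s)-f(j)=m\}|$.

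For (i), I would take $s=1$. Since $f(0)=0$, the increments $f(j+1)-f(j)$ vanish exactly for $0\le j\le k-1$. For $j\ge k$ they equal $2^{j-k}$ when $j>k$, and $1$ when $j=k$; all of these are nonzero. So $N_1(0)=k$, and the only remaining term is $\frac1q\eta_f(1,-f(q-1))$. To see that this term vanishes, iterate the decomposition. If $n$ ends in exactly $r\ge1$ digits $q-1$ preceded by a digit $d<q-1$, then $f(n+1)-f(n)=f(d+1)-f(d)-r f(q-1)$. This can equal $0$ only if $f(d+1)-f(d)=r(2^{q-1-k}-1)$. But $f(d+1)-f(d)\le 2^{q-2-k}<2^{q-1-k}-1$ precisely because $k\le q-3$, so no such $n$ exists. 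Hence $\eta_f(1,0)=k/q$. This is where the hypothesis $k\le q-3$ enters.

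For (ii), I would use the displayed identity and bound the carry sum by $1$ whenever the shifts $c_j=f(j)-f(j+s-q)$, $q-s\le j\le q-1$, are pairwise distinct. In that case the carry terms are distinct entries of the probability vector $(\eta_f(1,\cdot))$, so their sum is at most $1$. It then remains to show $N_s(m)\le k-1$. For $m=0$ and $s\ge2$ we have $N_s(0)=\max(k-s+1,0)\le k-1$. For $m\ne0$, the differences $f(j+s)-f(j)$ have the shapes $2^{b}-1$ (when $j\le k<j+s$) or $2^{b}-2^{c}$ (when both $j,j+s>k$). Uniqueness of binary representations allows at most about two coincidences, giving $N_s(m)\le 2\le k-1$ when $k\ge 3$; the case $k=2$ will need a direct check. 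For $s=1$ and $m\ne0$ the count $N_1(m)$ is at most $2$ (only the value $1$ repeats, at $j=k$ and $j=k+1$) and there is only one carry term, so $\eta_f(1,m)\le 3/q$. This needs refining when $k=2$, e.g.\ by unfolding the carry term once more.

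I expect the main obstacle to be the distinctness of the $c_j$ in (ii). It can fail when $j$ and $j+s-q$ both lie in the zero region $\{0,\ldots,k\}$ or when two differences of powers of $2$ coincide. When shifts coincide, the corresponding carry terms merge into a single $\eta_f(1,\cdot)$ with multiplicity. My first attempt would be to bound a merged term with multiplicity $r$ by $r\,\alpha_f$, then solve the resulting self-referential inequality $\alpha_f\le (N+r\alpha_f+\cdots)/q$ for $\alpha_f$. Failing that, I would treat the boundary values of $s$ (near $1$ and near $q-1$) and the small cases $k=2$, $k=q-3$ separately by explicit case analysis.
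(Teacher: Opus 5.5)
Your skeleton matches the paper's: the squeeze $\eta_f(1,0)\le\beta_f\le\alpha_f$, \cref{lemma max beta_f} to reduce to $1\le s\le q-1$, and the one-step carry identity, which is exactly \eqref{eq eta_f recursive formula}. Part (i) is correct. For $s=1$ the carry term is $\frac1q\eta_f(1,f(q-1))$, not $\frac1q\eta_f(1,-f(q-1))$, but your direct trailing-digit argument makes the slip harmless.

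The gap is (ii), which is the real content of the proposition. You never bound the carry sum when the shifts $c_j$ coincide, and the self-referential inequality you suggest does not close with the bounds you have. For example, take $k=2$, $s=q-1$, $m=f(q-1)$. Then $N_s(m)=1$ and $c_1=c_2=0$. If $\alpha_f$ were attained at this pair, your inequality would read $\alpha_f\le(1+2\alpha_f+1)/q$, which gives only $\alpha_f\le 2/(q-2)$ instead of $2/q$. Likewise, your bounds $N_s(m)\le 2$ and $\eta_f(1,m)\le 3/q$ for $m\neq0$ are too weak when $k=2$, and the promised ``direct checks'' are not supplied. Also, the claim that the increment $1$ repeats at $j=k+1$ is false, since $f(k+2)-f(k+1)=2$.

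What is missing is to push the computation from (i) one step further and to use monotonicity of $f$ on the digits.
\begin{itemize}
\item The increments $f(j+1)-f(j)$, $0\le j\le q-2$, are $0$ for $j<k$ and the pairwise distinct values $2^{j-k}$ for $j\ge k$. All negative values of $f(n+1)-f(n)$ come from $\epsilon_0(n)=q-1$, a set of density $1/q$. Hence $\eta_f(1,0)=k/q$ and $\eta_f(1,m)\le1/q$ for every $m\neq0$.
\item Since these increments are nondecreasing, $j\mapsto f(j+s)-f(j)$ is $0$ for $j+s\le k$ and strictly increasing afterwards. So $N_s(m)=0$ for $m<0$ and $N_s(m)\le1$ for $m>0$.
\item Also $c_j=f(j)-f(j+s-q)\ge0$, with $c_j=0$ exactly when $j\le k$. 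So the feared coincidences between ``differences of powers of $2$'' never occur.
\end{itemize}

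With these facts, bound the carry terms one at a time; no distinctness is needed.
\begin{itemize}
\item For $m<0$: $\eta_f(s,m)\le sk/q^2\le k/q$.
\item For $m>0$: every carry argument $m+c_j$ is positive, so $\eta_f(s,m)\le 1/q+s/q^2\le 2/q\le k/q$.
\item For $m=0$ and $s\ge2$: exactly $r=\max(0,k-q+s+1)$ carry arguments vanish and contribute $k/q$ each, while the others contribute at most $1/q$. An elementary check using $k\le q-3$ gives $\max(0,k-s+1)+(rk+s-r)/q\le k$.
\end{itemize}
This is the paper's argument, and it handles $k=2$ without a separate case.
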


\begin{proof}
    Fix $k$, $q$ and $f$ as in the statement. We use the notation $n = \sum_{j \in \N} \epsilon_j(n) q^j$ for $\epsilon_j(n) \in \{0, \ldots, q-1\}$.
    
    \underline{Case $t=1$:} Let $\ell=\ell(n)$ be defined as $\ell(n) =0$ if $\epsilon_0(n) \in \{0,\ldots,q-2\}$ and if $\epsilon_0(n)=q-1$ it is the minimal index so that $\epsilon_{\ell-1}(n)=q-1$ but $\epsilon_\ell(n) \neq q-1$. With this,
    \begin{align*}
        f(n+1) - f(n) &= f(\epsilon_\ell(n) + 1) - f(\epsilon_\ell(n)) - \ell f(q-1) \\
        &= 2^{\epsilon_\ell(n)-k} \cdot \1_{\epsilon_\ell(n) \geq k}- \ell \cdot (2^{q-1-k}-1)
    \end{align*}
    Notice that when $\ell(n) \geq 1$, that is when $\epsilon_0(n) = q-1$, then $f(n+1)-f(n)$ is negative (since for all $n \in \N$ $2^{\varepsilon_\ell (n)-k} \leq 2^{q-1-k}-1$). When $\epsilon_0(n) \in \{ 0, \ldots, k-1\}$, or equivalently when $n \in \bigcup_{j=0}^{k-1} q \N +j$, then $f(n+1)-f(n) = 0$. Finally, when $\epsilon_0(n) = j$ for $k \leq j < q-1$,  or equivalently when $n \in  q \N +j$, we get that $f(n+1)-f(n) = 2^{j-k}$.  Thus, $\eta_f(1,0) = k/q$, for $k \leq j < q-1$, $\eta_f(1,2^{j-k}) = 1/q$ and for all the other $m \in \Z$, $\eta_f(1,m) \leq 1/k$. 

    \underline{Case $2 \leq t \leq q-1$:} Notice that if $\epsilon_0(n)  \leq q-1-t$ we get that $f(n+t)-f(n) = f(\epsilon_0(n)+t) - f(\epsilon_0(n))$. Otherwise, $f(n+t)-f(n) $ equals 
    \begin{equation*}
        f(\epsilon_0(n)+t-q) - f(\epsilon_0(n)) - (\ell -1) f(q-1) + f(\varepsilon_\ell(n)+1)-f(\varepsilon_\ell(n))
    \end{equation*}
    where in this case for $\epsilon_0(n)  \geq q-t$, $\ell = \ell(n)$ is the minimal index (greater than or equal to $1$) so that $\epsilon_\ell(n) \neq q-1$. Notice that if $n'= \sum_{j\in \N} \epsilon_{j+1}(n)q^j$ (that is $\epsilon_j(n')= \epsilon_{j+1}(n)$ for all $j \in \N$), then we can write 
    \begin{equation*}
        f(n+t)-f(n) = \begin{cases}
            \begin{array}{cc}
                 f(\epsilon_0(n)+t) - f(\epsilon_0(n)) & \textit{ if } \epsilon_0(n)  \leq q-1-t \\ 
                f(\epsilon_0(n)+t-q) - f(\epsilon_0(n)) + f(n'+1)-f(n') & \textit{ if } \epsilon_0(n)  \geq q-t  .
            \end{array}
        \end{cases}
    \end{equation*}
    With this
    \begin{equation} \label{eq eta_f recursive formula}
        \eta_f(t,m) = \frac{1}{q} \sum_{i=0}^{q-1-t} \1_{f(i+t) - f(i)=m} + \frac{1}{q} \sum_{i=q-t}^{q-1}  \eta_f(1,m+f(i) -f(i+t-q))
    \end{equation}
    First, if $m<0$, there is no $i\in \{0, \ldots,q-1-t\}$ for which $ f(i+t) - f(i)=m$, so bounding $ \eta_f(1,m+f(i) -f(i+t-q)) \leq k/q$, we get that $\eta_f(t,m) \leq tk/q^2 \leq k/q$. Now, if $m >0$,
    there is at most one $i\in \{0, \ldots,q-1-t\}$ for which $ f(i+t) - f(i)=m$. However, notice that since $i+t-q < i$, by construction $f(i)-f(i+t-q)$ is non-negative for all $i\in \{q-t, \ldots,q-1 \}$. In particular, since $m>0$, $f(i)-f(i+t-q) \neq -m $, hence $\eta_f(1,m+f(i) -f(i+t-q)) \leq 1/q$, for all $i\in \{q-t, \ldots,q-1 \}$. Thus, $\eta_f(t,m) \leq 1/q + t/q^2 \leq 2/q$, for all $m > 0$. 

    For $m = 0$, we need to bound \eqref{eq eta_f recursive formula}. To do so, notice that $\sum_{i=0}^{q-1-t} \1_{f(i+t) = f(i)} = \max\{0,k-t+1\}$. For the second part of the expression, 
    \begin{align*}
        &\frac{1}{q}\sum_{i=q-t}^{q-1}  \eta_f(1,f(i) -f(i+t-q)) = \frac{1}{q}\sum_{i=0}^{t-1}  \eta_f(1,f(i+(q-t)) -f(i)) \\
        = &\frac{1}{q}\max(0,k-(q-t)+1) \cdot \eta_f(1,0) + \frac{1}{q}\sum_{i=\max(0,k-(q-t)+1)}^{t-1}  \eta_f(1,f(i+(q-t)) -f(i)) \\
        \leq&  \frac{\max(0,k-(q-t)+1) k+ (t-\max(0,k-(q-t)+1))}{q^2}.
    \end{align*}
    Therefore, when $\max(0,k+1-(q-t))  =0$ the previous expression is bounded by $t/q^2 \leq 1/q$. 
    Otherwise, if $k+1\leq  t$ 
     \begin{equation*}
         \eta_f(t,0) \leq\frac{(k-(q-t)+1)(k-1)+ t}{q^2} \leq \frac{k-1}{q} + \frac{1}{q} = \frac{k}{q}.
     \end{equation*}
     and if $k+1> t$, notice that $k - (q-t) +1 \leq q-2 -(q-t)= t-2$, so the expression is bounded by $((t-2) k+t)/q^2 \leq ((t-2)(k+1) +2 )/q^2$ and therefore
    \begin{equation*}
        \eta_f(t,0) \leq \frac{k-t+1}{q} + \frac{(t-2)(k+1) +2 }{q^2} \leq \frac{k-t+1}{q} + \frac{t-1 }{q}  = \frac{k}{q}.
    \end{equation*}
    Thus, for any $t\in \{1, \ldots, q-1\}$, $\eta_f(t,0) \leq \frac{k-t+1}{q} + \frac{t-1}{q} = \frac{k}{q} $. 

    Summarizing, we have that $\eta_f(1,0) = k/q$ and that for all $m \in \Z$ and $t \in \{1, \ldots, q-1\}$, $\eta_f(t,m) \leq k/q $ so we conclude the proposition with \cref{lemma max beta_f}. 
\end{proof}

To transition to substitution subshifts we use the following lemmas.

\begin{lemma} \label{lemma alpha_f mod M}
    Let $f$ be a strongly $q$-additive function, then 
    \begin{equation*}
        \alpha_f = \lim_{M \to \infty} \left( \sup_{t \geq 1, m \in \Z} d( \{ n \in \N \colon f(n+t) - f(n) =m \pmod M \}) \right) .
    \end{equation*}
\end{lemma}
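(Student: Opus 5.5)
Write $\alpha_{f,M} = \sup_{t \geq 1, m \in \Z} d(\{ n \in \N \colon f(n+t) - f(n) = m \pmod M \})$. By Bésineau's theorem,
\[
d(\{ n \colon f(n+t)-f(n) = m \pmod M \}) = \sum_{j \in \Z} \eta_f(t, m+jM),
\]
since the set is a disjoint union of the sets $\{ n \colon f(n+t)-f(n) = m+jM\}$ and $(\eta_f(t,\cdot))$ is a probability vector. Summability of this vector lets us exchange the limit defining density with the sum: the tail $\sum_{|j|>J}$ has upper density at most $1 - \sum_{|j|\le J}\eta_f(t,m+jM)$. Taking $j=0$ immediately gives the lower bound $\alpha_{f,M} \geq \eta_f(t,m)$ for all $t,m$, hence $\alpha_{f,M} \geq \alpha_f$ for every $M$. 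The task is therefore to show $\limsup_{M\to\infty} \alpha_{f,M} \leq \alpha_f$.

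For the upper bound I would mimic \cref{lemma max beta_f}, reducing the supremum over all $t$ to $t \in \{1,\dots,q-1\}$. The recursion in the proof of \cref{lemma max beta_f}, with $t = qs+i$, is a pointwise identity on residues. It expresses the set $\{ f(n+t)-f(n) \equiv m \pmod M\}$ through the sets with $t$ replaced by $s$ or $s+1$ and $m$ shifted by $f(j)-f(i+j)$ or $f(j)-f(i+j-q)$. So the same computation shows that $d(\{ n \colon f(n+t)-f(n) \equiv m \pmod M \})$ is an average of the analogous mod-$M$ densities with smaller $t$. Iterating, every such density is a convex combination of densities with $s \in \{1,\ldots,q-1\}$; the recursion terminates because $s$ and $s+1$ are smaller than $qs+i$ once $t \geq q$. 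Hence
\[
\alpha_{f,M} = \max_{1 \le s \le q-1,\ m \in \Z/M\Z} \sum_{j} \eta_f(s, m+jM),
\]
and this is a finite maximum.

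It remains to show that for fixed $s \in \{1,\dots,q-1\}$,
\[
\sup_m \sum_j \eta_f(s,m+jM) \to \max_n \eta_f(s,n) \quad \text{as } M \to \infty.
\]
Given $\varepsilon>0$, choose $R$ with $\sum_{|n|>R} \eta_f(s,n) < \varepsilon$, simultaneously for the finitely many $s$. For $M > 2R+1$, each residue class $m + M\Z$ contains at most one integer of $[-R,R]$. Therefore
\[
\sum_j \eta_f(s,m+jM) \leq \max_n \eta_f(s,n) + \varepsilon \leq \alpha_f + \varepsilon.
\]
This gives $\limsup_M \alpha_{f,M} \le \alpha_f$, which together with the lower bound proves the lemma; in fact the limit exists.

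The main obstacle is the reduction to small $t$. A naive tail argument fails there, because the tail of $\eta_f(t,\cdot)$ need not be uniform in $t$. I would carefully check that the recursion from \cref{lemma max beta_f} holds verbatim modulo $M$, which needs only strong $q$-additivity and the density splitting along residues mod $q$.
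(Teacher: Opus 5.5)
Your proposal is correct and follows the paper's proof essentially step for step. It has the same three ingredients: the lower bound $\alpha_{f,M}\ge\alpha_f$ from the $j=0$ term, the reduction to $t\in\{1,\ldots,q-1\}$ via the mod-$M$ version of the recursion in \cref{lemma max beta_f}, and a tail estimate for $(\eta_f(s,n))_{n\in\Z}$ once $M$ is large. Your fixed window $[-R,R]$ with $M>2R+1$ is a cleaner form of the paper's window $(-M/2,M/2]$.

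One small fix is needed in your justification of $d(\{n\colon f(n+t)-f(n)\equiv m \pmod M\})=\sum_j\eta_f(t,m+jM)$. The union over $|j|>J$ lies in $\{n\colon |f(n+t)-f(n)-m|>JM\}$, so its upper density is at most $1-\sum_{|k-m|\le JM}\eta_f(t,k)$, and this tends to $0$. The bound $1-\sum_{|j|\le J}\eta_f(t,m+jM)$ that you wrote does not vanish as $J\to\infty$ unless the residue class of $m$ carries all the mass.
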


\begin{proof}
    Fix a $q$-additive function $f$. Abusing notation, we write $\eta_f(t,m \mod M) = d( \{ n \in \N \colon f(n+t) - f(n) =m \pmod M \})$, for $M \geq 2$. 
    Using the same steps of the proof of \cref{lemma max beta_f}, we get that for any $t \geq 1,$ $m \in \Z$ $\eta_f(t,m \mod M)$ is a convex combination of the values in $\{ \eta_f(s,n \mod M) \colon s \in \{1, \ldots, q-1\}, n \in \{-\lfloor\frac{M-1}{2}\rfloor, \ldots, \lfloor\frac{M}{2}\rfloor\}\}$. Therefore, if we denote
    $\alpha^M_f = \sup \{ \eta_f(t,m \mod M) \colon t \geq 1, m \in \Z  \}$, then 
    \begin{equation*}
        \alpha^M_f = \max \bigg\{ \eta_f(t,m \mod M) \colon t \in \{1, \ldots, q-1\}, m \in \Big\{-\lfloor\frac{M-1}{2}\rfloor, \ldots, \lfloor\frac{M}{2}\rfloor\Big\}\bigg\}.
    \end{equation*}
    Notice that  $\alpha_f^M \geq \alpha_f$ for every $M \geq 2$, so we only need to prove that $\limsup_{M \to \infty} \alpha_f^M = \alpha_f$.
    \begin{align*}
        \limsup_{M \to \infty} \alpha_f^M =& \limsup_{M \to \infty} \max_{t \in \{1, \ldots, q-1\}} \Big(\max_{ -M/2 < m \leq M/2} \eta_f(t,m \mod M) \Big) \\
        =&\max_{t \in \{1, \ldots, q-1\}} \limsup_{M \to \infty}  \Big(\max_{ -M/2 < m \leq M/2}\sum_{n \in \Z} \eta_f(t,m + Mn) \Big) \\
        =& \max_{t \in \{1, \ldots, q-1\}} \limsup_{M \to \infty}  \Big(\max_{ -M/2 < m \leq M/2} \eta_f(t,m) \Big)\\ &+ \max_{t \in \{1, \ldots, q-1\}} \limsup_{M \to \infty}  \Big(\max_{ -M/2 < m \leq M/2} \sum_{n \in \Z \setminus \{0\}} \eta_f(t,m + Mn) \Big) 
    \end{align*}
    where the first term of the last equality is just $\alpha_f$ by \cref{lemma max beta_f} and the second one is non-negative and bounded above by
    \begin{equation*}
         \limsup_{M \to \infty} \max_{t \in \{1, \ldots, q-1\}} \bigg(1- \sum_{-M/2 < n \leq M/2} \eta_f(t,n) \bigg)
    \end{equation*}
    which is zero because for every $t \in \{1, \ldots, q-1\}$, $(\eta_f(t,n))_{n \in \Z}$ is a probability vector, concluding the proof.
\end{proof}

\begin{lemma} \label{lemma beta_f approx delta_sigma}
    Let $f$ be a strongly $q$-additive function and suppose $\beta_f = \alpha_f$, then 
    \begin{equation} \label{eq beta_f aprox delta_sigma}
        \beta_f = \lim_{M \to \infty} \left( \sup_{t \geq 1} d( \{ n \in \N \colon f(n+t) = f(n) \pmod M \}) \right) .
    \end{equation}
\end{lemma}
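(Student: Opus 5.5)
Writing $\beta_f^M = \sup_{t \geq 1} d(\{ n \in \N \colon f(n+t) = f(n) \pmod M \})$, the plan is to squeeze $\beta_f^M$ between $\beta_f$ and $\alpha_f^M$ (the quantity from the proof of \cref{lemma alpha_f mod M}), and then let $M \to \infty$.

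\textbf{Lower bound.} For every $t \geq 1$ and $M \geq 2$ we have
\begin{equation*}
\{ n \colon f(n+t) = f(n) \} \subseteq \{ n \colon f(n+t) \equiv f(n) \pmod M \}.
\end{equation*}
Both sets have densities, by Bésineau's theorem (the second is a countable disjoint union of level sets whose densities sum). Hence
\begin{equation*}
\eta_f(t,0) \leq \eta_f(t, 0 \bmod M) = \sum_{n \in \Z} \eta_f(t,Mn).
\end{equation*}
Taking the supremum over $t$ gives $\beta_f \leq \beta_f^M$ for every $M$. Consequently $\beta_f \leq \liminf_{M\to\infty} \beta_f^M$.

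\textbf{Upper bound.} The quantity $\beta_f^M$ is the supremum over the subfamily $m = 0$ of the family defining
\begin{equation*}
\alpha_f^M = \sup_{t\geq1, m\in\Z} d(\{ n \colon f(n+t)-f(n) \equiv m \pmod M\}).
\end{equation*}
Therefore $\beta_f^M \leq \alpha_f^M$. By \cref{lemma alpha_f mod M}, $\alpha_f^M \to \alpha_f$ as $M \to \infty$. Hence $\limsup_{M\to\infty} \beta_f^M \leq \alpha_f$, and the hypothesis $\alpha_f = \beta_f$ gives $\limsup_{M \to \infty} \beta_f^M \leq \beta_f$.

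\textbf{Conclusion.} Combining the two bounds, the limit in \eqref{eq beta_f aprox delta_sigma} exists and equals $\beta_f$.

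The only delicate step is already handled by \cref{lemma alpha_f mod M}. There, the tail mass $\sum_{n\neq 0}\eta_f(t,m+Mn)$ vanishes uniformly in $t$, and this uses the reduction to $t\in\{1,\dots,q-1\}$. So I expect no real obstacle beyond checking that the densities mod $M$ are well defined as sums of Bésineau densities.
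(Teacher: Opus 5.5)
Your proof is correct and is essentially the paper's own argument: the squeeze $\beta_f \leq \liminf_{M\to\infty}\beta_f^M \leq \limsup_{M\to\infty}\beta_f^M \leq \limsup_{M\to\infty}\alpha_f^M = \alpha_f = \beta_f$, with \cref{lemma alpha_f mod M} supplying the limit of $\alpha_f^M$. One small caution: a countable disjoint union of sets with densities need not have density equal to the sum of those densities. So the identity $d(\{n \colon f(n+t)\equiv f(n) \pmod M\}) = \sum_{n\in\Z}\eta_f(t,Mn)$ really uses that $(\eta_f(t,m))_{m\in\Z}$ is a probability vector, or alternatively the unique ergodicity of $X_{f,M}$.
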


\begin{proof}
    Using \cref{lemma alpha_f mod M} and the notation in its proof, if we denote $\beta_f^M = \sup_{t \geq 1} d( \{ n \in \N \colon f(n+t) = f(n) \pmod M \})$ we get that 
     \begin{equation*}
         \beta_f \leq \liminf_{M \to \infty} \beta_f^M \leq \limsup _{M \to \infty} \beta_f^M  \leq \limsup_{M \to \infty} \alpha_f^M = \alpha_f = \beta_f. \qedhere
     \end{equation*}
\end{proof}

With this we are ready to prove \cref{thrm density partial rigidity rates}.

\begin{proof}[Proof of \cref{thrm density partial rigidity rates}]
    Consider $\cF$ the family of functions from \cref{prop family of ex}. The set 
    \begin{equation*}
        B_\cF = \{ \beta_f \colon f \in \cF \} = \{ k/q \in [0,1] \colon q \geq 5, 2 \leq k \leq q-3\}
    \end{equation*}
    is dense in $[0,1]$ (actually $B_\cF = (0,1) \cap \Q$). By \cref{prop family of ex}, every function $f \in \cF$ satisfies the assumptions of \cref{lemma beta_f approx delta_sigma} and so using the notation from \cref{sec preliminaries} together with \cref{thrm partial rigidity main}, equation \eqref{eq beta_f aprox delta_sigma} can be restated as
    \begin{equation*}
         \beta_f = \lim_{M \to \infty} \delta_{\sigma_{f,M}}.
    \end{equation*}
    In particular if $\Delta_\cF = \{ \delta_{\sigma_{f,M}} \in [0,1] \colon f \in \cF, M \geq 2\}$, then $B_\cF \subset \overline{\Delta_\cF} $ and therefore $\Delta_\cF$ is dense in $[0,1]$ concluding the theorem. 
\end{proof}

\appendix

\section{Model of $q$-additive function} \label{appendix}

The definition of substitution is restricted to a finite alphabet $\cA$ to study dynamics in the compact metric space $\cA^{\Z}$, but as an algebraic object it is always possible to define a substitution $\sigma \colon \cA^* \to \cA^*$ as a morphism for concatenation without any assumption on the cardinality of $\cA$. In this appendix we show a result considered folklore that any strongly $q$-additive function can be derived from a substitution sequence on the countable alphabet $\Z$ (see \cref{prop subs strongly q-additive}). For general $q$-additive functions, we use a sequence of such substitutions (see \cref{prop subs general q-additive}). Taking these sequences $\pmod M$, one recovers the substitutions $\sigma_{f,M}$ and directive sequences $(\sigma_{n,f,M})_{n \in \N}$ on the corresponding subsets of $\{0, \ldots, M-1\} = \Z/M$ introduced in \cref{sec preliminaries}. 

We denote by $\Z^*$ the finite words with letters in $\Z$ and to distinguish between multiplication and concatenation we denote the concatenation of $u,v \in \Z^*$ by $u\smwhtsquare v$. 
%For all $r \geq 1$, we also write for $a,b_1,\ldots, b_r \in \Z$, $ a \oplus (b_1 \smwhtsquare \cdots \smwhtsquare b_r) =  (a  + b_1) \smwhtsquare \cdots \smwhtsquare ( a + b_r)$. 

\begin{proposition} \label{prop subs strongly q-additive}
    Let $f \colon \N \to \N$ be a strongly $q$-additive function. Let $\sigma \colon \Z^* \to \Z^*$ be the substitution given by $\sigma(n) = n \smwhtsquare (n+f(1)) \smwhtsquare \cdots \smwhtsquare (n+f(q-1))$ for all $n \in \Z$. If $x$ denotes the infinite sequence $x =  \lim_{m \to \infty} \sigma^m(0)$, then for all $k \in \N$, $x_k = f(k)$.
\end{proposition}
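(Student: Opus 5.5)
The plan is to establish, by induction on $m\geq 1$, the finite identity
\begin{equation*}
    \sigma^m(0) = f(0) \smwhtsquare f(1) \smwhtsquare \cdots \smwhtsquare f(q^m-1).
\end{equation*}
The proposition then follows at once. Since $f(0)=0$, the word $\sigma(0)$ begins with $0$, so each $\sigma^m(0)$ is a prefix of $\sigma^{m+1}(0)$, and the limit $x$ is well defined. Its $k$th letter equals the $k$th letter of $\sigma^m(0)$ for any $m$ with $q^m>k$, which is $f(k)$.

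The key tool is a translation property of $\sigma$. For a word $w=w_0\smwhtsquare\cdots\smwhtsquare w_{\ell-1}\in\Z^*$ and $a\in\Z$, write $a\oplus w=(a+w_0)\smwhtsquare\cdots\smwhtsquare(a+w_{\ell-1})$. Directly from the definition, $\sigma(a)=a\oplus\sigma(0)$. Also $\sigma(a\oplus w)=a\oplus\sigma(w)$, because $\sigma(a+w_i)=(a+w_i)\oplus\sigma(0)=a\oplus\sigma(w_i)$ and $\oplus$ commutes with concatenation.

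For the inductive step I will avoid commuting $\oplus$ through $\sigma^m$ and instead use
\begin{equation*}
    \sigma^{m+1}(0)=\sigma^m(\sigma(0))=\sigma^m(f(0))\smwhtsquare\cdots\smwhtsquare\sigma^m(f(q-1)).
\end{equation*}
Iterating the translation property gives $\sigma^m(f(a))=f(a)\oplus\sigma^m(0)$. By the induction hypothesis this block is the word $\big(f(a)+f(b)\big)_{0\le b<q^m}$, and it occupies positions $aq^m+b$ in $\sigma^{m+1}(0)$. So I need $f(aq^m+b)=f(a)+f(b)$ for $0\le a<q$ and $0\le b<q^m$. This is exactly $q$-additivity, $f(aq^m+b)=f(aq^m)+f(b)$, combined with the strong property $f(aq^m)=f(a)$; for $a=0$ it holds trivially since $f(0)=0$. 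The base case $m=1$ is the definition of $\sigma(0)$.

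The only real bookkeeping obstacle is the position indexing: the $a$th block of length $q^m$ must correspond to the most significant base-$q$ digit $a$. Splitting $\sigma^{m+1}=\sigma^m\circ\sigma$, rather than $\sigma\circ\sigma^m$, is what makes this come out cleanly with leading-digit decomposition, matching how the additivity axiom is stated.
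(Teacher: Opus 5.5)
Your proof is correct, but it organizes the induction differently from the paper.

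The paper inducts on the position $k$ in the infinite word. It uses that $x$ is a fixed point of the constant-length substitution, so $x_{[mq,(m+1)q)}=\sigma(x_m)$. The least-significant-digit identity $f(mq+j)=f(mq)+f(j)=f(m)+f(j)$ for $0\le j<q$ then finishes in one line.

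You instead induct on the iteration depth and split $\sigma^{m+1}=\sigma^m\circ\sigma$, peeling off the most significant digit. This requires the translation equivariance $\sigma^m(c)=c\oplus\sigma^m(0)$, and it uses the axioms at scale $q^m$ rather than only at scale $q$.

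What each route buys:
\begin{itemize}
\item The paper's route is shorter and needs only the $\ell=1$ instances of the axioms. It leans, however, on the standard fact that the limit exists and is a fixed point of $\sigma$.
\item Your finite identity for $\sigma^m(0)$ makes the existence of the limit and the prefix structure explicit. Your equivariance lemma is the same property the paper uses in the primitivity argument of \cref{prop q-additive mod M directive sequence}, namely $\sigma_{[n,N)}(a)=\sigma_{[n,N)}(0)\oplus a$.
\item Your decomposition carries over unchanged to the directive-sequence setting of \cref{prop subs general q-additive}. Write $\sigma_{[0,m]}=\sigma_{[0,m)}\circ\sigma_m$ with $\sigma_m(0)=f(0)\smwhtsquare f(q^m)\smwhtsquare\cdots\smwhtsquare f((q-1)q^m)$. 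The same block argument then produces the letters $f(aq^m)+f(b)=f(aq^m+b)$ using only $q$-additivity.
\end{itemize}

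In that general setting the paper's position-by-position induction has no single fixed point to work with. It therefore resorts to the ``$q$-additive implies $q^2$-additive, iterate'' argument instead.
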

%Notice that with the previous notation $\sigma(n) = n \oplus \sigma(0)$ for all $n \in \Z$.
\begin{proof}
    Notice that, by definition,  $\sigma(0)  = f(0) \smwhtsquare f(1) \smwhtsquare \cdots \smwhtsquare f({q-1}), $ 
    so the consequence for $0 \leq k < q$ is trivially true. By induction, suppose it is already proved for all $k \leq m q -1$, then we want to prove it for $mq, mq+1, \ldots, (m+1)q-1$. Now by construction $x_{[mq, (m+1)q)} = \sigma(x_m)$ and by inductive assumption $x_m = f(m)$ then 
    \begin{align*}
        x_{[mq, (m+1)q)} &= f(m) \smwhtsquare (f(m) + f(1)) \smwhtsquare \cdots  \smwhtsquare (f(m) + f({q-1})) \\ 
        &= f(qm) \smwhtsquare f(qm + 1) \smwhtsquare \cdots  \smwhtsquare f(mq + {q-1}),
    \end{align*}
    concluding the proof. 
\end{proof}

For a general $q$-additive function $f$, we need a sequence of substitutions and the proof is slightly more technical.  

\begin{proposition} \label{prop subs general q-additive}
    Let $f \colon \N \to \N$ be a $q$-additive function. For $j \in \N$, let $\sigma_j \colon \Z^* \to \Z^*$ be the substitution given by $\sigma_j(n) = n \smwhtsquare (n+f(q^j)) \smwhtsquare \cdots \smwhtsquare (n+f(q^j(q-1))) $ for all $n \in \Z$. If $x$ denotes the infinite word $x =  \lim_{j \to \infty} ( \sigma_0 \circ \cdots \circ \sigma_j) (0)$, then for all $k \in \N$, $x_k = f(k)$.
\end{proposition}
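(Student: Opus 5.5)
We need to show that $x_k = f(k)$ for every $k$, where $x=\lim_{j\to\infty}\sigma_{[0,j]}(0)$. The plan is to prove a stronger statement in which all levels are handled at once.

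For each $n \in \N$, let $x^{(n)}$ be the sequence defined by $x^{(n)}_m = f(q^n m)$ for $m \in \N$. The claim is the identity
\begin{equation*}
    \sigma_n(x^{(n+1)}) = x^{(n)} \quad \text{for all } n \in \N,
\end{equation*}
understood as an equality of infinite words, with $\sigma_n$ applied letter by letter.

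To verify it, note that $\sigma_n$ has constant length $q$. So the block of $\sigma_n(x^{(n+1)})$ in positions $[mq,(m+1)q)$ is $\sigma_n(f(q^{n+1}m))$, whose $i$th letter, for $0 \le i < q$, is $f(q^{n+1}m) + f(q^n i)$. By $q$-additivity with $a=m$, $b=q^n i < q^{n+1}$ and exponent $n+1$, we get
\begin{equation*}
    f(q^{n+1}m + q^n i) = f(q^{n+1}m) + f(q^n i).
\end{equation*}
The left-hand side is $f(q^n(mq+i)) = x^{(n)}_{mq+i}$, which gives the identity. When $n+1 \ge 1$ the definition applies directly; the case $m=0$ is trivial since $f(0)=0$.

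Next we show that the limit defining $x$ exists and equals $x^{(0)}$. Since $f(0)=0$, each $\sigma_j(0)$ starts with $0$. Hence $\sigma_{[0,j]}(0)$ is a prefix of $\sigma_{[0,j+1]}(0)$, and its length is $q^{j+1}$, so the limit $x$ exists. Iterating the identity gives $x^{(0)} = \sigma_{[0,j]}(x^{(j+1)})$. Because $x^{(j+1)}_0 = f(0) = 0$ and all the maps have constant length, the first $q^{j+1}$ letters of $x^{(0)}$ are exactly $\sigma_{[0,j]}(0)$. Letting $j\to\infty$ gives $x = x^{(0)}$, that is, $x_k = f(k)$.

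The only delicate point is bookkeeping: the substitutions must be composed in the right order ($\sigma_0$ outermost), and one must check that $q$-additivity is applied with $b = q^n i < q^{n+1}$. Everything else is a direct computation.
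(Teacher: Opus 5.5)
Your proof is correct and rests on the same computation as the paper's: $q$-additivity gives $f(q^{n+1}m+q^ni)=f(q^{n+1}m)+f(q^ni)$, so $\sigma_n$ sends the letter $f(q^{n+1}m)$ to the block $\bigl(f(q^n(mq+i))\bigr)_{0\le i<q}$. The paper carries this out explicitly only for $\sigma_0\circ\sigma_1$ and then iterates informally by viewing $f$ as $q^2$-additive. Your level-wise identity $\sigma_n(x^{(n+1)})=x^{(n)}$, together with the telescoping prefix argument, instead makes both the iteration and the existence of the limit fully rigorous; it is the same relation $u_n=\tau_n(u_{n+1})$ used in the proof of \cref{prop q-additive mod M directive sequence}.
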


\begin{proof}
    As before, by definition, 
    $\sigma_0(0) = f(0) \smwhtsquare f(1) \smwhtsquare \cdots \smwhtsquare f({q-1}), $ 
    so the consequence for $0 \leq k < q$ is trivially true. Consider $f_1 (n) = f(q n)$ for all $n$ and notice that $f_1$ is also a $q$-additive function. Using the same reasoning, we get that
    \begin{align*}
        \sigma_1(0) =  f_1(0) \smwhtsquare f_1(1) \smwhtsquare \cdots \smwhtsquare f_1({q-1}) 
        = f(0) \smwhtsquare  f(q) \smwhtsquare \cdots \smwhtsquare  f(({q-1})q)
    \end{align*}
    Thus, 
    \begin{align*}
        (\sigma_0 \circ \sigma_1)(0) =&  \sigma_0[0] \smwhtsquare  \sigma_0[f(q)] \smwhtsquare \sigma_0[f(2q)]  \smwhtsquare \cdots \smwhtsquare  \sigma_0[f(({q-1})q)] 
    \end{align*}
    where for $0 \leq i < q$
    \begin{align*}
        \sigma_0[f(qi)] &= f(iq) \smwhtsquare (f(iq) + f(1) )\smwhtsquare \cdots \smwhtsquare (f(iq) + f(q-1)) \\
        &= f(iq) \smwhtsquare f(iq +1) \smwhtsquare \cdots \smwhtsquare f(iq + q -1)
    \end{align*}
    Therefore, for $0 \leq k < q^2$, $[(\sigma_0 \circ \sigma_1)(0)]_k = f(k)$. Notice that any $q$-additive function is also a $q^2$-additive function for the base. So one can repeat the previous proof to deduce the equality for $k < q^4$ and iterating the same reasoning it is proven for all $k \in \N$. 
\end{proof}

\bibliographystyle{abbrv} 

%\bibliographystyle{alpha}

% We choose the "alpha" reference style
\bibliography{refs} % Entries are in the library.bib file

\end{document}